\documentclass[12pt]{amsart}

\usepackage{amsthm, amsmath, amssymb, fullpage, bbm, amsrefs}
\usepackage{hyperref}
\usepackage{color}

\newcommand{\T}{\mathbb{T}} %% torus
\newcommand{\C}{\mathbb{C}} %% complex numbers
\newcommand{\D}{\mathbb{D}} %% unit disk
 %% integers
\newcommand{\N}{\mathbb{N}}
\newcommand{\cd}{\overline{\D}} %% closed disk
 %% closed tri-disk
 %% closed bidisk
 %% closed polydisk
\newcommand{\al}{\alpha}

\newcommand{\Sz}{\mathcal{S}}
\newcommand{\mA}{\mathcal{A}}
\newcommand{\mH}{\mathcal{H}}

\numberwithin{equation}{section}

\newtheorem{theorem}{Theorem}[section]

\newtheorem{corollary}[theorem]{Corollary}
\newtheorem{lemma}[theorem]{Lemma}

\theoremstyle{definition}

\title{Three radii associated to Schur functions on the polydisk}

\author{Greg Knese}
\address{Department of Mathematics, Washington University in St Louis, St Louis, MO 63130, USA}
\email{geknese@wustl.edu}

\date{\today}

\thanks{Partially supported by NSF grant DMS-2247702}

\keywords{Schur class, Schur-Agler class, Agler class, transfer function realization,
Bohr radius, Bohnenblust-Hille inequality, polarization, absolute convergence, 
von Neumann's inequality}

\subjclass[2020]{47A48, 47A13, 32A35, 32A38}

\begin{document}

\begin{abstract}
This article examines three radii associated to bounded analytic
functions on the polydisk: the well-known Bohr radius, the Bohr-Agler radius,
and the Schur-Agler radius.  We prove explicit upper and lower bounds for
the Bohr-Agler radius, an explicit lower bound for the Schur-Agler radius,
and an asymptotic upper bound for the Schur-Agler radius.  
The Bohr-Agler radius obeys the same (known) asymptotic as
the Bohr radius while we show the Schur-Agler radius
is roughly of the same growth as the Bohr radius.
As a corollary, we bound the Bohr radius on the bidisk below by $0.3006$.
Finally, we improve some estimates of P.G.\ Dixon on Agler norms
of homogeneous polynomials using some modern inequalities.
\end{abstract}

\maketitle

\tableofcontents

\section{Introduction}

In this article we examine three radii associated to functions
analytic on the unit polydisk $\D^d =\{z =(z_1,\dots, z_d) \in \C^d: |z_1|,\dots, |z_d|<1\}$.
In particular, we study the relationship between the well-known 
Bohr radius and two lesser studied radii which we
refer to as the Bohr-Agler radius and the Schur-Agler radius.  
Our purpose is to gain some insight into whether operator inequalities
 for analytic functions are closely related to absolute convergence
or cancellation properties of the functions.  

First, the \emph{Bohr radius} is a way of measuring the absolute convergence or cancellation
properties of the power series of bounded analytic functions.
The Schur class $\Sz_d$ of the polydisk $\D^d$ 
is the set of all analytic $f:\D^d\to \cd$.
Given $f \in \Sz_d$ with power series $f(z) = \sum_{\al} f_{\al} z^{\al}$
we define the coefficient-wise $\ell^1$ norm
\begin{equation} \label{ell1}
\|f\|_1 := \sum_{\al} |f_{\al}|.
\end{equation}
(A more precise notation might be $\|\hat{f}\|_{\ell^1}$ but
we write the above for simplicity.)
Setting $f_r(z) = f(rz)$ for $0<r<1$,
 the Bohr radius of $\D^d$ is defined to be
\[
K_d = \sup\{r>0: \text{ for all } f \in \mathcal{S}_d \text{ we have } \|f_r\|_1 \leq 1 \}.
\]
The Bohr radius was introduced by Harald Bohr in the study of Dirichlet series
and he proved $K_1 = 1/3$ \cite{Bohr}.
Boas and Khavinson \cite{BK} proved
\[
\frac{1}{3\sqrt{d}} \leq K_d \leq 2 \sqrt{\frac{\log d}{d}}.
\]
After the culmination of deep work by many authors
the precise asymptotic 
\[
K_d \sim \sqrt{\frac{\log d}{d}}
\]
 was established; 
see \cite{defant}, \cite{bayart}.  
Boas-Khavinson remarked an improved \emph{explicit} lower bound by stating that
$K_d$ is at least the value of the root $r$ of the equation
\[
r + \sum_{k=2}^{\infty} r^k \binom{d+k-1}{k}^{1/2} = \frac{1}{2}.
\]
For instance, when $d=2$
\[
r+ \sum_{k=2}^{\infty} r^k \sqrt{k+1} = \frac{1}{2}.
\]
This can be computed via its relation to the polylogarithm
\[
Li_{s}(z) = \sum_{k=1}^{\infty} \frac{z^k}{k^{s}}
\]
and in this case we are solving
\[
r + \frac{1}{r}(Li_{-1/2}(r) -r - r^2\sqrt{2}) = 1/2 \text{ or rather }
Li_{-1/2}(r) = \frac{3}{2}r + (\sqrt{2}-1)r^2.
\]
Computer software yields $K_2 \geq 0.287347$. % \geq \frac{1}{3\sqrt{2}} \approx 0.2357.
It still seems that the only known value
of $K_d$ is $K_1 = 1/3$.  Also, we do not know of any further explicit
bounds on $K_d$ beyond the Boas-Khavinson inequality and the 
bound $K_2 \geq 0.3006$ that we obtain in this paper (see Corollary \ref{bidisk}).
All of the other
bounds are asymptotic or involve an unnamed constant.
The paper \cite{newapproach} proves a sharper lower bound based
on power series whose coefficients are certain Sidon constants---which are
not explicitly known in any simple closed form.

Our next two radii are about an interesting subclass of the Schur class,
namely the \emph{Schur-Agler class}.
The Schur-Agler class of the polydisk, $\mA_d$, is the set of
$f \in \Sz_d$ which satisfy
\[
\|f(T_1,\dots, T_d)\| \leq 1
\]
for all $d$-tuples $T=(T_1,\dots, T_d)$ of commuting strictly contractive operators
on a (common) Hilbert space.  We
will call this the Agler class for brevity.  
Functions in the Agler class possess a variety of features
that are lacking for general Schur class functions.
Most notable for us is the transfer function realization formula
used in Section \ref{sec:KAlower}.  Part of our motivation
is not just to study the Agler class but to see how close Schur class
functions are to belonging to the Agler class.
Von Neumann's inequality
proves that $\Sz_1 = \mA_1$ \cite{vN} and And\^{o}'s inequality proves 
$\Sz_2 = \mA_2$ \cite{Ando}.  Counterexamples first constructed by Varopoulos
demonstrate that $\Sz_d \ne \mA_d$ for $d>2$ \cite{varo}.  
For a variety of interesting recent papers about the Agler class see
the references \cite{Anderson}, \cite{Bhowmik}, \cite{Barik}, \cite{Kojin}, \cite{Bickel}, \cite{Debnath}.

One can consider the Bohr radius of the Agler class,
which might be called the \emph{Bohr-Agler radius}:
\[
K(\mA_d) := \sup \{ r>0: \|f_r\|_1 \leq 1 \text{ for all } f \in \mA_d\}.
\]
Necessarily, $K(\mA_d) \geq K_d$ because our requirement
is over a smaller class of functions.  
Also, $K(\mA_1) = K_1, K(\mA_2) = K_2$.

Finally, we consider the \emph{Schur-Agler radius}
\[
SA_d = \sup\{r>0: f_r \in \mA_d \text{ for all } f \in \Sz_d\}.
\]
Necessarily,  $SA_1=SA_2 = 1$ and $SA_d \geq K_d$.  Indeed,
if $r < K_d$ then for $f \in \Sz_d$ we have
 $\sum_{\al} r^{|\al|} |f_{\al}| \leq 1$
 and then we also have for any $d$-tuple of strict commuting
 contractions $T$
 \[
 \|f(rT)\| \leq \sum_{\al} \|T^{\al}\| r^{|\al|}|f_{\al}| \leq 1
 \]
 and therefore $r\leq SA_d$.  Taking $r\nearrow K_d$ we have $K_d \leq SA_d$.
 
While the Agler class does not have any direct connections to absolute convergence
beyond the simple inequality just described, the methods used in the study of
the Agler class and the Bohr radius have some remarkable similarities.
In both cases random methods are used to build 
exceptional examples based on 
the Kahane-Salem-Zygmund inequality 
(see \cite{BK}, \cite{varo}, \cite{Kahane} Chapter 6, \cite{dixon}).
In addition, it is of interest to know whether membership in the Agler class
implies any special absolute convergence properties.  

Our main results will be some explicit inequalities for the above radii as
well as some asymptotic estimates.  Our work has a few novel ingredients 
as well as a synthesis of a variety of results in the literature.

The first result is an explicit estimate for $K(\mA_d)$.

\begin{theorem}\label{KAlower}
For $d\geq 1$
\[
K(\mA_d) \geq \frac{1}{\sqrt{d}+2}.
\]
Even more, for $d \geq 1$, 
$K(\mA_d)$ is greater than or equal to the 
root $r>0$ of the equation
\[
\sum_{k=1}^{\infty} r^k \binom{d+k-2}{k-1}^{1/2} = 1/2.
\]
\end{theorem}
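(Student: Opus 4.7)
The plan is to combine the transfer function realization for the Schur-Agler class with a Cauchy-Schwarz/Parseval estimate on homogeneous parts and close via a Bohr-type argument. For $f \in \mA_d$ with $f(0) = A$, write $f(z) = A + BZ(I - DZ)^{-1}C$ using the standard transfer realization, where $Z(z) = \sum_k z_k P_k$ for an orthogonal decomposition $\mH = \bigoplus_{k=1}^d \mH_k$ and $U = \begin{pmatrix} A & B \\ C & D \end{pmatrix}$ is a contraction on $\C \oplus \mH$; this forces $\|B\|, \|C\| \leq \sqrt{1-|A|^2}$ and $\|D\| \leq 1$. The $n$-th homogeneous part is $f_n(z) = BZ\xi_{n-1}(z)$, where $\xi_{n-1}(z) := (DZ)^{n-1}C$ is an $\mH$-valued polynomial of degree $n-1$. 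Since $\|DZ\| \leq 1$ on $\cpd$, we get $\|\xi_{n-1}\|_{L^\infty(\T^d;\mH)} \leq \|C\|$, and by vector-valued Parseval the $\mH$-coefficients $v_\beta$ of $\xi_{n-1}$ satisfy $\sum_{|\beta|=n-1}\|v_\beta\|^2 \leq 1-|A|^2$.

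Expanding $f_n(z) = \sum_i z_i BP_i\xi_{n-1}(z)$, the coefficient $f_\al$ for $|\al|=n$ equals $\sum_{i: \al_i \geq 1} BP_i v_{\al-e_i}$. The central technical step is to apply Cauchy-Schwarz carefully---exploiting the orthogonality of the $P_i$'s and the fact that $\beta$ ranges over only $\binom{d+n-2}{n-1}$ multi-indices of size $n-1$---to obtain
\[
\sum_{|\al|=n}|f_\al| \leq (1-|A|^2) \binom{d+n-2}{n-1}^{1/2}.
\]

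With this bound, summing over $n \geq 1$ gives $\|f_r\|_1 \leq |A| + (1-|A|^2) S(r)$, where $S(r) := \sum_{n \geq 1} r^n \binom{d+n-2}{n-1}^{1/2}$. Requiring $\|f_r\|_1 \leq 1$ uniformly in $|A| \in [0,1]$ reduces to $(1+|A|)S(r) \leq 1$, and the worst case $|A| \to 1$ gives $S(r) \leq 1/2$; hence $K(\mA_d) \geq r_0$ where $S(r_0) = 1/2$, proving the sharper second statement. For the explicit bound $K(\mA_d) \geq 1/(\sqrt d + 2)$, I would verify $S(1/(\sqrt d + 2)) \leq 1/2$ via a combinatorial estimate of $\binom{d+n-2}{n-1}$---for instance combining the AM-GM inequality $\sqrt X \leq (1+X)/2$ with the closed-form generating function $\sum_{n \geq 1} x^n \binom{d+n-2}{n-1} = x/(1-x)^d$---to reduce $S(r)$ to geometric-type series.

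The main obstacle is the sharpness of the coefficient bound in the second step: naive Cauchy-Schwarz after Parseval yields only $(1-|A|^2) \binom{d+n-1}{n}^{1/2}$, which would give a weaker Boas-Khavinson-type estimate. Achieving the sharper factor $\binom{d+n-2}{n-1}^{1/2}$ appears to require performing Cauchy-Schwarz on the decomposition $f_n = \sum_i z_i g_i$ with $g_i := BP_i\xi_{n-1}$ of degree $n-1$ in a manner that avoids picking up an extra factor of $\sqrt d$ from summing over the $d$ components.
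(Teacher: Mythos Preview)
Your overall strategy is the paper's: transfer realization, peel off one factor of $Z$ from the $n$-th homogeneous term, Parseval on the degree-$(n-1)$ piece $\xi_{n-1}$, then Cauchy--Schwarz over the $\binom{d+n-2}{n-1}$ multi-indices of size $n-1$. The ``main obstacle'' you flag resolves exactly along the line you suggest. Writing $S_n := \sum_{|\al|=n}|f_\al| = \sum_{|\al|=n}\mu_\al f_\al$ with unimodular $\mu_\al$ and reindexing $\al = \beta + e_i$ gives
\[
S_n \;=\; B \sum_{|\beta|=n-1} U_\beta\, v_\beta, \qquad U_\beta := \sum_{i=1}^d \mu_{\beta+e_i} P_i,
\]
and since each $U_\beta$ is unitary, $|S_n| \le \|B\|\sum_\beta \|v_\beta\| \le \|B\|\,\binom{d+n-2}{n-1}^{1/2}\bigl(\sum_\beta\|v_\beta\|^2\bigr)^{1/2} \le (1-|A|^2)\binom{d+n-2}{n-1}^{1/2}$. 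This is exactly the paper's argument, phrased there via an integral over the torus and partial isometries $M_\al = \sum_j \mu_{\al+e_j}P_j$ playing the role of your $U_\beta$; no stray $\sqrt d$ appears because the $P_i$'s get absorbed into a single unitary per $\beta$ rather than estimated separately.

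The genuine gap is your route to the explicit bound $1/(\sqrt d+2)$. Your AM--GM step $\sqrt X \le (1+X)/2$ together with $\sum_{n\ge 1} r^n \binom{d+n-2}{n-1} = r/(1-r)^d$ yields $S(r) \le \tfrac12\bigl(\tfrac{r}{1-r} + \tfrac{r}{(1-r)^d}\bigr)$, and at $r = 1/(\sqrt d+2)$ the second term is of order $e^{\sqrt d}/\sqrt d$, which is far from bounded. The paper instead uses the elementary termwise bound $\binom{d+k-2}{k-1} \le d^{\,k-1}$ (each factor $\tfrac{d+j-1}{j} \le d$ for $d\ge 1$), so that $S(r) \le \sum_{k\ge1} r^k d^{(k-1)/2} = r/(1-r\sqrt d)$, and solving $r/(1-r\sqrt d) = 1/2$ gives $r = 1/(\sqrt d+2)$ directly.
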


Since $K(\mA_d) \geq K_d \sim \sqrt{\frac{\log d}{d}}$
the estimate above is obviously not optimal for large $d$.
However, it could be useful for small $d$.
In the case $d=2$, we must solve
\[
\sum_{k=1}^{\infty} r^k \sqrt{k} = Li_{-1/2}(r) = 1/2
\]
to see that $r\geq 0.3006$.
\begin{corollary} \label{bidisk}
\[
K(\mA_2) = K_2 \geq 0.3006.
\]
\end{corollary}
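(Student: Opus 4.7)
The plan is to combine two ingredients: the identification $\mathcal{A}_2 = \mathcal{S}_2$ coming from And\^o's inequality, which is already cited in the paper, and the explicit lower bound of Theorem \ref{KAlower} specialized to $d=2$.

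First I would record that the equality $K(\mA_2) = K_2$ is immediate: since $\mA_2 = \Sz_2$, the two supremum definitions range over the same class of functions, so they coincide. This reduces the corollary to proving $K(\mA_2) \geq 0.3006$.

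Next I would apply Theorem \ref{KAlower} at $d=2$. The binomial coefficient simplifies as $\binom{d+k-2}{k-1} = \binom{k}{k-1} = k$, so the defining equation becomes
\[
\sum_{k=1}^{\infty} r^k \sqrt{k} = \tfrac{1}{2},
\]
which is exactly $Li_{-1/2}(r) = 1/2$ in the notation already introduced in the introduction. It then suffices to check that the unique positive root of this equation in $(0,1)$ is at least $0.3006$. The series $\sum_{k \geq 1} \sqrt{k}\, r^k$ is strictly increasing in $r$, so one only needs to verify that evaluating the left-hand side at $r = 0.3006$ gives a value no larger than $1/2$. This is a routine numerical check (the tail $\sum_{k \geq N} \sqrt{k}(0.3006)^k$ is easily estimated by comparison with a geometric series, so only finitely many terms need to be summed rigorously).

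The only potential obstacle is making the numerical evaluation rigorous enough to defend the specific digit $0.3006$; however, since the series converges geometrically at this value of $r$ and the decimal is only claimed to four places, no subtlety is involved beyond a transparent tail bound. All the analytic work is already contained in Theorem \ref{KAlower}.
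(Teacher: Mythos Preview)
Your proposal is correct and matches the paper's approach exactly: the paper deduces the corollary from And\^o's inequality (giving $K(\mA_2)=K_2$) together with Theorem~\ref{KAlower} specialized to $d=2$, reducing to the equation $Li_{-1/2}(r)=\sum_{k\geq 1}\sqrt{k}\,r^k=1/2$ and then citing the numerical value $0.3006$.
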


%\begin{remark}
This is the best lower bound on $K_2$ we have seen in the literature.  We do not
know of any non-trivial upper bounds on $K_2$ beyond $K_1=1/3$.  

\iffalse
Extremely accurate calculations are required to estimate the Bohr radius
of a fixed function and thus far we have been unable to find a two variable 
Schur function with Bohr radius less than 1/3. Recall that the one variable
upper bound is produced from the family of examples $\frac{a-z}{1-az}$ for $a\nearrow 1$.
Numerical calculations seem to suggest that the Bohr radii of the family of examples
\[
\frac{a+ \frac{1}{2}(1-a)(z_1+z_2) - z_1z_2}{1- \frac{1}{2}(z_1+z_2) - az_1z_2}
\]
for $a\in (0,1)$ are all greater than $1/3$.  This family is obtained by composing $\frac{a-z}{1-az}$
with the rational inner function
\[
\frac{z_1z_2 - \frac{1}{2}(z_1+z_2)}{1- \frac{1}{2}(z_1+z_2)}. \quad \diamond
\]
\end{remark}
\fi
 
Next, we obtain an explicit lower bound for $SA_d$.

\begin{theorem} \label{SAlower}
For $d \geq 2$
\[
SA_d \geq \frac{1}{\sqrt{d-1}}
\]
\end{theorem}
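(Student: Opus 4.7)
The plan is to show that for every $f\in\Sz_d$, every commuting $d$-tuple of strict contractions $T=(T_1,\ldots,T_d)$, and every $r<1/\sqrt{d-1}$, one has $\|f(rT_1,\ldots,rT_d)\|\le 1$. Unlike the Bohr--Agler setting of Theorem~\ref{KAlower}, the Agler-class transfer function realization is not available for a general Schur function, so one must work with the weaker $L^2$ structure of the Fourier coefficients of $f$.

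My starting point would be to decompose $f=\sum_{k\ge 0}P_k$ into homogeneous parts $P_k(z)=\sum_{|\al|=k}f_\al z^\al$, using the basic Parseval bound $\sum_{|\al|=k}|f_\al|^2\le 1$ (which follows from $\int_{\T^d}|f|^2\le 1$ and orthogonality of the $P_k$ on $\T^d$). An operator Cauchy--Schwarz factorization, writing $P_k(T)$ as a row-column product of the coefficient vector against the monomial operator vector, gives
\[
\|P_k(T)\|^2 \le \Big(\sum_{|\al|=k}|f_\al|^2\Big)\cdot\Big\|\sum_{|\al|=k}T^\al(T^\al)^*\Big\|\le\Big\|\sum_{|\al|=k}T^\al(T^\al)^*\Big\|.
\]
Everything then reduces to controlling the operator Szego-type sum $\|\sum_{|\al|=k}T^\al(T^\al)^*\|$; the aim is a bound of the form $r^{2k}(d-1)^k\cdot\mathrm{poly}(k)$ under $\|T_j\|\le r$, strictly sharper than the trivial $r^{2k}d^k$ (which would only yield $SA_d\ge 1/\sqrt d$).

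The sharpening from $d^k$ to $(d-1)^k$ is where commutativity of the $T_j$'s must be crucially used, and it is the principal obstacle. A natural route is via And\^o's theorem: by dilating a pair of the $T_j$'s to commuting unitaries on a larger space, those two variables contribute only trivially to the operator norm growth in the Szego sum, leaving effectively $d-2$ contractions (and, with a careful multinomial accounting, yielding the $(d-1)^k$ bound rather than $(d-2)^k$). Summing the resulting estimate over $k$ in the form $\sum_k r^k(d-1)^{k/2}$ gives a series that converges for $r<1/\sqrt{d-1}$. A secondary subtlety is that this triangle--inequality-style sum does not automatically give the required bound of exactly $1$; a tighter Cauchy--Schwarz applied directly to $\sum_k r^kP_k(T)$ (exploiting $\sum_k\|P_k\|_2^2\le 1$), or a normalization leveraging $|f_0|^2+\sum_{k\ge 1}\|P_k\|_2^2\le 1$, will likely be necessary to close the estimate at $1$ rather than at some larger constant.
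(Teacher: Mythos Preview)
Your approach has two genuine gaps that I do not see how to repair along the lines you suggest.

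First, the And\^o step is the heart of your argument and it does not go through. And\^o's theorem dilates a commuting pair $(T_1,T_2)$ to commuting unitaries $(U_1,U_2)$ on a larger space $K\supset\mathcal H$, but it gives no control over how $T_3,\dots,T_d$ (which act on $\mathcal H$) interact with $U_1,U_2$ on $K$. There is no reason the $U_j$ should commute with any extension of $T_3,\dots,T_d$ to $K$, so mixed monomials $T^\alpha$ cannot be evaluated on the dilated pair. Without that, the two ``unitarized'' variables do not decouple from the rest, and the passage from $d^k$ to $(d-1)^k$ in $\bigl\|\sum_{|\alpha|=k}T^\alpha(T^\alpha)^*\bigr\|$ has no mechanism. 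The vague ``careful multinomial accounting'' does not supply one.

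Second, even granting a $(d-1)^k$ bound, your plan for summing to exactly $1$ is not yet a plan. The triangle inequality gives $\sum_k r^k(d-1)^{k/2}=(1-r\sqrt{d-1})^{-1}$, which is never $\le 1$ for $r>0$; the Cauchy--Schwarz variants you sketch pair $\sum_k\|P_k\|_2^2\le 1$ against $\sum_k r^{2k}(d-1)^{k}$, which has the same divergence at $r=1/\sqrt{d-1}$.

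The paper's proof avoids homogeneous decomposition entirely. It invokes a theorem of Grinshpan, Kaliuzhnyi-Verbovetskyi, Vinnikov and Woerdeman: if a commuting strictly contractive tuple $T$ satisfies $\Delta_T^{\alpha}\ge 0$ for the two multi-indices $\alpha=\sum_{j\ne p}e_j$ and $\alpha=\sum_{j\ne q}e_j$ (any fixed $p\ne q$), then $\|f(T)\|\le 1$ for every $f\in\Sz_d$. A short inductive lemma shows that $\sum_{j\in S}T_jT_j^*\le I$ forces $\Delta_T^{\sum_{j\in S}e_j}\ge 0$. Since $\sum_{j\ne p}\tfrac{1}{d-1}T_jT_j^*\le I$ holds for any tuple of contractions, the rescaled tuple $\tfrac{1}{\sqrt{d-1}}T$ lies in the Grinshpan class and the conclusion is immediate. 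The gain over Brehmer's $1/\sqrt d$ comes precisely from needing positivity for sums over only $d-1$ indices rather than all $d$.
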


This is a minor improvement over a result of Brehmer which proves the
lower bound $\frac{1}{\sqrt{d}}$ \cite{Brehmer}; this is
explicitly stated in \cite{lubin}.  Theorem \ref{SAlower}
uses a result of Grinshpan et al.\ \cite{grinshpan} in a fundamental way.
Again, the result is not of the right growth order since $SA_d \geq K_d$.

Using a construction of Maurizi-Queffelec \cite{MQ} originally designed to show
optimality of the exponents in the famous Bohnenblust-Hille inequalities \cite{BH},
we are able to give an upper bound for $K(\mA_d)$ because the examples
constructed there conveniently belong to the Agler class.

\begin{theorem} \label{KAupper}
For positive integers $q,m$
\[
q^{\frac{1}{2m}-\frac{1}{2}} \geq K(\mA_{qm})
\]
which leads to the {\color{red} estimate 
\[
 %\limsup_{d\to \infty} \frac{K(\mA_d)}{\sqrt{\frac{\log d}{d}}} = 1
 K(\mA_d) \leq e^{1/2} \sqrt{\frac{\log d }{d}}\exp\left(O\left(\frac{1}{\log d}\right)\right).
 \]
 and
 \[
1 \leq \liminf_{d\to \infty} \frac{K(\mA_d)}{\sqrt{\frac{\log d }{d}}} 
\leq \limsup_{d\to \infty} \frac{K(\mA_d)}{\sqrt{\frac{\log d }{d}}} \leq e^{1/2}.
\] 
 }
\end{theorem}
 
Work in %Galicer et al. \cite{galicer} 
Dixon \cite{dixon} allows us to 
obtain an asymptotic upper bound for $SA_d$. 

\begin{theorem} \label{SAupper}
   There exists a constant $c>0$ such that
  \[
  SA_d \leq c \left(\frac{\log d}{d}\right)^{1/4}.
  \]
  \end{theorem}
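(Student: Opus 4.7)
The plan is to exhibit, for each large $d$, a Schur-class function whose dilations leave the Agler class rapidly. Since dilations act diagonally on homogeneous pieces, the natural witnesses are $m$-homogeneous polynomials $p$ with large Agler-to-sup-norm ratio. The reduction is immediate: if $\|p\|_\infty \le 1$ then $p \in \Sz_d$, and $p_r(z) = r^m p(z)$ gives $\|p_r\|_{\mA_d} = r^m \|p\|_{\mA_d}$; hence $p_r \in \mA_d$ forces $r \le \|p\|_{\mA_d}^{-1/m}$, and
\[
SA_d \le \left(\frac{\|p\|_\infty}{\|p\|_{\mA_d}}\right)^{1/m}
\]
whenever $p$ is an $m$-homogeneous polynomial. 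The task reduces to constructing $p$ with a very small right-hand side.

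To construct $p$ I would follow Dixon \cite{dixon}, taking the random polynomial $p = \sum_{|\al| = m} \varepsilon_\al z^\al$ with independent signs $\varepsilon_\al = \pm 1$. The Kahane-Salem-Zygmund inequality supplies a choice of signs for which $\|p\|_\infty$ is of order $\sqrt{N_m \log d}$, where $N_m = \binom{d+m-1}{m}$. For a matching Agler-norm lower bound I would evaluate $p$ at a specific commuting tuple of contractions---e.g.\ a creation-operator tuple on a symmetric Fock-type module, as Dixon did---but using the refined estimates from this paper's final section, which upgrade the constants via modern multilinear inequalities. The outcome should be a ratio
\[
\frac{\|p\|_{\mA_d}}{\|p\|_\infty} \gtrsim c^{m}\left(\frac{d}{\log d}\right)^{m/4}
\]
for a constant $c>0$ independent of $m$ and $d$.

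Substituting this into the reduction and taking $m$-th roots, the exponents in $m$ cancel exactly, leaving $SA_d \le c^{-1}(\log d/d)^{1/4}$, as claimed. The bound is independent of $m$ once $m$ is large enough to activate the construction, so any sufficiently large fixed $m$ (or a mildly $d$-dependent choice) suffices.

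The main obstacle is the exponent $m/4$ in the ratio bound. Both the Kahane-Salem-Zygmund sup-norm upper bound and the naive $\ell^2$-type Agler-norm lower bound behave like $d^{m/2}$ in leading order, so a direct comparison recovers only a logarithmic gap in $d$. Extracting the additional polynomial factor $(d/\log d)^{m/4}$---the source of the final exponent $\tfrac14$ rather than $0$---is precisely where Bohnenblust-Hille-type inequalities enter, and where the sharpening of Dixon's estimates alluded to in the abstract plays its essential role.
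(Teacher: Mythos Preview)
Your reduction $SA_d \le (\|p\|_\infty/\|p\|_A)^{1/m}$ for $m$-homogeneous $p$ is correct and is exactly how the paper begins. But the two mechanisms you invoke afterwards both misfire.

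First, the direction of the Bohnenblust--Hille machinery is reversed. Those inequalities, and the ``refined estimates from this paper's final section'' (Section~\ref{sec:dixon}), produce \emph{upper} bounds for $\|p\|_A/\|p\|_\infty$; that is precisely the content of Theorem~\ref{improvedixon}. An upper bound on this ratio can only yield a \emph{lower} bound for $SA_d$, so it is useless for Theorem~\ref{SAupper}. The paper instead quotes Dixon's \emph{lower} bound on the ratio, and the relevant combinatorial input is not Bohnenblust--Hille but \emph{partial Steiner systems}: Dixon's polynomial is a random $\pm 1$ combination of multilinear monomials indexed by a system $S_p(n+1,k,d)$ with $k=2n+1$, and the paper's contribution is to extract from Dixon's argument the explicit count $N(n+1,k,d)\ge 2^{-k}(d/k)^{n+1}$ with the $k$-dependence visible, giving
\[
\frac{\|p\|_A}{\|p\|_\infty} \;>\; \frac{2^{-k/2}}{8}\,\frac{d^{\,n/2}}{k^{(n+1)/2}\sqrt{\log k}}.
\]

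Second, the ratio estimate you assert, $\|p\|_A/\|p\|_\infty \gtrsim c^{m}(d/\log d)^{m/4}$, is stronger than anything available: in the display above the exponent of $d$ is $(k-1)/4$, not $k/4$, and the denominator carries $k^{(k+1)/4}$, not $(\log d)^{k/4}$. Taking the $k$-th root one gets roughly $d^{1/4-1/(4k)}/k^{1/4}$, so a \emph{fixed} $m=k$ yields only $SA_d \lesssim_m d^{-(m-1)/(4m)}$, with exponent strictly worse than $1/4$. The ``mildly $d$-dependent choice'' you mention in passing is therefore not optional but the whole point: one must take $k\approx\log d$, so that $d^{-1/(4k)}$ stays bounded while $k^{1/4}\approx(\log d)^{1/4}$ supplies the logarithmic factor in the final bound $SA_d\le c\,(\log d/d)^{1/4}$.
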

  
Thus, there is a gap between the lower estimate $K_d \sim \sqrt{\frac{\log d}{d}}$ and the upper estimate above.
  
In addition, the work %of \cite{galicer} built off of a paper 
of Dixon \cite{dixon} also obtains useful inequalities for comparing the Schur norm
\[
\|f\|_\infty = \sup\{ |f(z)| : z \in \D^d\}
\]
and the Agler norm
\begin{equation} \label{Aglernorm}
\|f\|_{A} = \sup\{ \|f(T)\|\}
\end{equation}
where the supremum is taken over all $d$-tuples $T$ of commuting strictly
contractive operators on a Hilbert space.
It is tempting to try to use some of Dixon's inequalities to prove
a lower bound for $SA_d$, however we show that these inequalities
with modern improvements incorporated
cannot beat the bound in Theorem \ref{SAlower}.  
Defining
\begin{equation} \label{Ckd}
C_{k}(d) = \sup\left\{ \frac{\|p\|_{A}}{\|p\|_{\infty}}: p\in \C[z_1,\dots, z_d] \text{ homogeneous and } \deg p = k\right\}
\end{equation}
we obtain the following estimate by inserting an
estimate from \cite{bayart} and the polarization inequality from \cite{harris}.

\begin{theorem} \label{improvedixon}
There exists a constant $c$ such that for $d,k\geq 2$
\[
C_k(d) \leq c k  (e/2)^{k} d^{\frac{k-2}{2}}.
\]
\end{theorem}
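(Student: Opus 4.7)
The plan is to follow the multilinear framework of Dixon \cite{dixon} for bounding Agler norms of homogeneous polynomials by sup norms, substituting two of its ingredients with modern sharp estimates.

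Given a homogeneous polynomial $p$ of degree $k$ on $\C^d$, let $L$ denote the unique symmetric $k$-linear form satisfying $p(z) = L(z, \ldots, z)$, so that for any $d$-tuple $T = (T_1, \ldots, T_d)$ of commuting strict contractions on a common Hilbert space one has $p(T) = L(T, \ldots, T)$ (this makes sense because the components of $T$ commute). Write $\|L\|_m := \sup\{|L(z^{(1)}, \ldots, z^{(k)})| : z^{(j)} \in \cpd\}$ for the multilinear sup norm. The first step is to apply the complex polarization inequality of Harris \cite{harris} to obtain
\[
\|L\|_m \leq \frac{k^k}{k!}\,\|p\|_\infty,
\]
which is the sharp polarization constant in the complex case and improves the polynomial-to-multilinear step over what Dixon used.

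The second step is to bound $\|L(T,\ldots,T)\|$ in terms of $\|L\|_m$, $k$, and $d$. Dixon's original argument gives a transfer estimate whose combination with the polarization step produces growth of order $e^k d^{(k-2)/2}$ up to subexponential $k$-dependent factors. Inserting a sharper symmetric-multilinear-form estimate extracted from Bayart's analysis in \cite{bayart} improves the $k$-dependence of this transfer by an additional factor of essentially $2^{-k}$, so that after multiplying by $k^k/k!$ and invoking Stirling's asymptotic $k^k/k! \sim e^k/\sqrt{2\pi k}$, the exponential factors combine into $(e/2)^k$ and the subexponential corrections collapse into at most a linear factor in $k$, yielding the stated bound $c\,k\,(e/2)^k d^{(k-2)/2}$.

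The main technical obstacle is identifying precisely which inequality of Bayart to plug into Dixon's multilinear-to-operator step, and then bookkeeping the various $k$-dependent constants (the $1/\sqrt{k}$ from Stirling, any $\sqrt{k}$ or $k$ from the symmetric multilinear estimate, and the linear-in-$k$ symmetrization losses) so that the final prefactor in front of $(e/2)^k d^{(k-2)/2}$ is only $O(k)$ rather than something superlinear. The conceptual scheme is Dixon's; the improvement is entirely in the two sharp inputs, neither of which was available in the form we need at the time of Dixon's paper.
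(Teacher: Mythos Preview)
Your overall architecture matches the paper's: symmetrize $p$ to the multilinear form $L=\operatorname{Sym}(p)$, bound $\|p\|_A$ by a Grothendieck/dual-Bohnenblust--Hille ``transfer'' times $\|L\|_\infty$, then bound $\|L\|_\infty$ by a polarization constant times $\|p\|_\infty$. But you have misidentified where the factor $(e/2)^k$ comes from, and with the constants you actually state the argument does not close.

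The polarization constant you quote, $k^k/k!$, is the universal Banach-space bound, not Harris's theorem for the polydisk. Harris's result (used in the paper as $\operatorname{Pol}_k\le \frac{k^{k/2}(k+1)^{(k+1)/2}}{2^k k!}$) already carries the $2^{-k}$; after Stirling it is of order $(e/2)^k$, not $e^k$. Conversely, the input from \cite{bayart} is the subexponential multilinear Bohnenblust--Hille constant $B_{k-1}\lesssim (k-1)^{(1-\gamma)/2}$; it contributes only a power of $k$, and there is no ``additional factor of essentially $2^{-k}$'' coming from that side. If you run Dixon's transfer (Grothendieck plus the dual BH bound $\|G\|_V\le B_{k-1}d^{(k-2)/2}$, which you do not mention but need) with your stated $k^k/k!$, you obtain
\[
C_k(d)\ \le\ G_{\C}\,B_{k-1}\,d^{\frac{k-2}{2}}\cdot\frac{k^k}{k!}\ \lesssim\ k^{-\gamma/2}\,e^{k}\,d^{\frac{k-2}{2}},
\]
which is $e^k$, not $(e/2)^k$. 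To repair the argument, replace $k^k/k!$ by Harris's polydisk polarization constant and drop the unsupported $2^{-k}$ claim about \cite{bayart}; then the bookkeeping you describe does go through and yields the paper's inequality $C_k(d)\le G_{\C}B_{k-1}\operatorname{Pol}_k\,d^{(k-2)/2}\lesssim k^{(1-\gamma)/2}(e/2)^k d^{(k-2)/2}$.
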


The proof is in Section \ref{sec:dixon}. As far as we can
tell this estimate is only sufficient for proving 
$SA_d \geq c \frac{1}{\sqrt{d}}$, however our rehashing
of the proof and demonstration of the constants
involved may inspire others to do better.
In particular, bridging the methods of Dixon and
those of Hartz \cite{hartz}, discussed at the end of the paper,
may prove fruitful.

\section*{Acknowledgements}
Thank you to Michael Hartz for some valuable comments on a draft of this paper.
Thank you to the anonymous referee for a careful reading and several very useful corrections
and suggestions.

\section{Lower bound for Bohr-Agler radius: Theorem \ref{KAlower}} \label{sec:KAlower}

The Agler class is called the \emph{Agler} class because
of special formulas established by Agler \cite{Agler} and later elucidated by
Agler-McCarthy \cite{AMpick} and Ball-Trent \cite{BTpick}.
It is shown in \cite{AMbook}, that functions $f \in \mA_d$ 
possess an \emph{isometric transfer function realization}:
\[
f(z) = A + B P(z) (I - D P(z))^{-1} C
\]
where $V = \begin{pmatrix} A & B \\ C& D\end{pmatrix}$ is a block
isometric operator on $\C\oplus \mH$.  Here $\mH$ is some Hilbert
space with a further orthogonal decomposition
\[
\mH = \mH_1\oplus \cdots \oplus \mH_d
\]
and $P(z) = \sum_{j=1}^{d} z_j P_j$ where $P_j$ is projection onto
$\mH_j$.  Note that since $V$ is a contraction, $|B|^2, |C|^2 \leq (1-|A|^2)$.
In particular, $A = f_0$ and
\begin{equation} \label{BCineq} 
|B||C| \leq (1-|f_0|^2).
\end{equation}

We can write the power series for $f$ in terms of the above data as
follows.  Let $[d] = \{1,\dots, d\}$.  Let 
\begin{equation}\label{words}
W(k,d) = \{w:[k] \to [d]\}
\end{equation}
be the set of functions from $[k]$ to $[d]$ (or ``words of length'' $k$ with $d$ letters).  
We define for $w \in W(k,d)$
\begin{equation}\label{alw}
\al(w) = (\#(w^{-1}(1)), \dots, \#(w^{-1}(d)) \in \N^d.
\end{equation}

For example, the word $w = 3221321 \in W(7,3)$ has $\al(w) = (2,3,2)$.

Expanding the transfer function realization for $f(z) = \sum f_{\al}
z^{\al}$ yields that for $k\geq 1$, the $k$-th homogeneous term is given by
\[
BP(z)(DP(z))^{k-1} C.
\]
Expanding this we obtain formulas for coefficients
\[
f_{\al} = \sum_{w \in W(k,d), \al(w) = \al} B P_{w(1)} D \cdots D
P_{w(k)} C.
\]
The main quantity we need to estimate is
\[
S_k = \sum_{\al: |\al| = k} |f_{\al}|.
\]
\begin{lemma} With $f$ as written above, for $k\geq 1$ we have
\[
S_k \leq (1-|f_0|^2)\binom{d+k-2}{k-1}^{1/2}.
\]
\end{lemma}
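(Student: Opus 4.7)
The plan is to exploit the factorization $f^{(k)}(z) = BP(z)(DP(z))^{k-1}C$ by peeling off the leading $P(z)=\sum_j z_j P_j$; this reduces the relevant combinatorics from weight-$k$ multi-indices to weight-$(k-1)$ multi-indices, which is precisely where the improvement over the naive Cauchy-Schwarz bound $\sqrt{\binom{d+k-1}{k}}$ should come from.

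Concretely, I would set $v(z):=(DP(z))^{k-1}C=\sum_{|\beta|=k-1}z^\beta v_\beta$, an $\mH$-valued polynomial of degree $k-1$. Writing $BP(z)v(z)=\sum_j z_j\ip{v(z)}{P_j B^*}$ and matching coefficients of $z^\al$ gives
\[ f_\al=\sum_{j:\al_j>0}\ip{v_{\al-e_j}}{P_j B^*}, \]
so by the triangle inequality $S_k\le\sum_j\sum_{|\beta|=k-1}|\ip{v_\beta}{P_j B^*}|$. There are $N:=\binom{d+k-2}{k-1}$ multi-indices $\beta$ of weight $k-1$, so Cauchy-Schwarz first in $\beta$ and then in $j$, together with the orthogonal decomposition $\mH=\bigoplus_j \mH_j$, yields
\[ S_k \le \sqrt{N}\Bigl(\sum_j\|P_j B^*\|^2\Bigr)^{1/2}\Bigl(\sum_j\sum_\beta\|P_j v_\beta\|^2\Bigr)^{1/2} = \sqrt{N}\,\|B\|\Bigl(\sum_\beta\|v_\beta\|^2\Bigr)^{1/2}. \]

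Finally, vector-valued Parseval identifies $\sum_\beta\|v_\beta\|^2$ with $\int_{\T^d}\|v(z)\|^2\,d\mu(z)$, and the decisive observation is that for $z\in\T^d$ the operator $P(z)$ is a \emph{unitary} on $\mH$: the $P_j$ are orthogonal projections summing to $I$ and $|z_j|=1$. Hence $DP(z)$ is a contraction on the torus, $\|v(z)\|\le\|C\|$ there, and $\sum_\beta\|v_\beta\|^2\le\|C\|^2$; combined with $\|B\|\|C\|\le 1-|f_0|^2$ this delivers $S_k\le\sqrt{N}(1-|f_0|^2)$. The main obstacle is spotting this slice: Cauchy-Schwarz applied naively across all $\al$ with $|\al|=k$ only produces $\sqrt{\binom{d+k-1}{k}}$, and the sharper exponent emerges only after isolating a single letter $z_j$ and then exploiting the unitarity of $P(z)$ on the distinguished boundary.
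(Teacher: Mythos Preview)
Your proof is correct and follows essentially the same strategy as the paper: peel off one factor of $P(z)$ to reduce from weight-$k$ to weight-$(k-1)$ multi-indices, then combine Cauchy--Schwarz (producing the count $\binom{d+k-2}{k-1}$) with Parseval on $\T^d$ and the unitarity of $P(z)$ there to bound the remaining $(DP(z))^{k-1}C$ term by $\|C\|$. The paper organizes the same ingredients slightly differently---it introduces phases $\mu_\alpha$ to write $S_k$ as a single scalar, peels off the \emph{last} letter (packaging the phases into partial isometries $M_\alpha$), and applies Cauchy--Schwarz to a torus integral rather than to the index sum---but these are cosmetic rearrangements of the same idea.
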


\begin{proof}
For any multi-index $\al$, let $\mu_{\al} = \overline{\text{sgn} f_{\al}}$, so that $\mu_{\al} f_{\al} = |f_{\al}|$.

For $k=1$, the inequality
follows from the Schwarz lemma for the polydisk, however
it may be instructive to give the proof using the transfer
function formula.
We let $e_1,\dots, e_d$ be standard basis vectors of $\mathbb{N}^d$ or $\mathbb{C}^d$.
We have
\[
%\begin{aligned}
S_1 = \sum_{j=1}^{d} \mu_{e_j} f_{e_j} %\\
= \sum_{j=1}^d \mu_{e_j} BP_j C %\\
= B\left(\sum_{j=1}^{d} \mu_{e_j} P_j\right) C.
%\end{aligned}
\]
The operator $\sum_{j=1}^{d} \mu_{e_j} P_j$ is a partial
isometry (in fact, a unitary if none of the $\mu_{e_j}$'s vanish).
Therefore, $S_1 \leq |B||C| \leq (1-|f_0|^2)$.

For $k\geq 2$,
\[
\begin{aligned}
S_k &= \sum_{|\al|=k} \mu_{\al} f_{\al} \\
&= \sum_{|\al|=k} \mu_{\al} \sum_{w \in W(k,d), \al(w) = \al} B P_{w(1)} D \cdots D
P_{w(k)} C \\
&= \sum_{w\in W(k,d)} B P_{w(1)} D \cdots D \mu_{\al(w)}
P_{w(k)} C \\
&= \sum_{v\in W(k-1,d)} B P_{v(1)} D \cdots P_{v(k-1)} D \sum_{j=1}^{d} \mu_{\al(v)+e_j} P_j C\\
&= \sum_{v\in W(k-1,d)} B P_{v(1)} D \cdots P_{v(k-1)} D M_{\al(v)} C
\end{aligned}
\]
where $M_{\al} = \sum_{j=1}^{d} \mu_{\al+e_j} P_{j}$ is a partial isometry.  

\iffalse
Then, $S_k \leq |B||C| n^{k-1}$; however we can do better than this.
Indeed, 
\begin{multline*}
S_k^2 \leq |B|^2 | \sum_{v \in W_{k-1,n}} P_{v(1)} D\cdots D
P_{v(k-1)} D M_{\al(v)} C|^2 \\
= |B|^2 \sum_{v_1=1}^{n} |P_{v_1} D \sum_{v:[2:k-1]\to [n]} P_{v(2)}
D\cdots D P_{v(k-1)} D M_{\al(v)+e_{v_1}} C|^2 \\
\leq |B|^2 \sum_{v_1=1}^{n} |\sum_{v:[2:k-1]\to [n]} P_{v(2)} D \cdots
D P_{v(k-1)} D M_{\al(v)+e_{v_1}} C |^2 \\
\leq |B|^2|C|^2 n^{k-1}
\end{multline*}
where we repeatedly make use of the fact that the projections are
orthogonal and $D$ is a contraction.
Therefore, $S_k \leq |B||C| \sqrt{n}^{k-1} \leq (1-|A|^2)\sqrt{n}^{k-1}$.

Then, 
\[
\sum_{k=0}^{\infty} S_k r^k \leq |A| + (1-|A|^2) \sum_{k=1}^{\infty}
(r \sqrt{n})^{k}/\sqrt{n}.
\]
This is less than $1$ for all $|A| \in [0,1]$ precisely when
\[
\sum_{k=1}^{\infty} (r\sqrt{n})^{k}/\sqrt{n} \leq \frac{1}{2}
\]
which holds exactly when
\[
\frac{2r}{1-r\sqrt{n}} \leq 1
\]
which holds exactly when $r \leq \frac{1}{\sqrt{n}+2}$.  Thus, $K_{\mA_n}
\geq \frac{1}{\sqrt{n}+2}$.

An alternative estimate
\fi

This expression for $S_k$ can be viewed as a pairing of
coefficients of the two expressions
\[
B(P(z) D)^{k-1} 
=  \sum_{v \in W(k-1,d)} BP_{v(1)} D \cdots P_{v(k-1)} D z_{v(1)} \cdots z_{v(k-1)}
\text{ and }
\sum_{|\alpha| = k-1}  M_{\alpha} C z^{\alpha}
\]
and such a pairing can be accomplished with an integral expression:
\[
S_k = 
\int_{\T^{d-1}} B (P(z) D)^{k-1} \sum_{|\al|=k-1} M_{\alpha}  z^{-\alpha} C d\sigma(z)
\]
where $d\sigma$ is normalized Lebesgue measure on the torus $\T^{d-1}$.
Therefore, by Cauchy-Schwarz
\[
\begin{aligned}
S_k^2 %&= \left| \sum_{v \in W(k-1,d)} B P_{v(1)} D\cdots D
%P_{v(k-1)} D M_{\al(v)} C \right|^2 \\
%& = \left|\int_{\T^{d-1}} B (P(z) D)^{k-1} \sum_{|\al|=k-1} M_{\alpha}  z^{-\alpha} C d\sigma(z)\right|^2 \\
& \leq 
\int_{\T^{d-1}} |B (P(z)D)^{k-1}|^2 d\sigma \int_{\T^{d-1}} \left|\sum_{|\al|=k-1} M_{\alpha}  z^{-\alpha} C \right|^2 d\sigma
\leq |B|^2|C|^2 \binom{d+k-2}{k-1}. 
\end{aligned} 
\]
where the binomial coefficient is the count of $\al$ with $|\al| = k-1$.
So, $S_k \leq |B||C| \binom{d+k-2}{k-1}^{1/2}\leq (1-|f_0|^2)\binom{d+k-2}{k-1}^{1/2}$
by \eqref{BCineq}.
\end{proof}

To prove Theorem \ref{KAlower}, we estimate
\[
\|f_r\|_1 = \sum_{k=0}^{\infty} S_k r^k \leq  |f_0| + (1-|f_0|^2) \sum_{k=1}^{\infty} r^k  \binom{d+k-2}{k-1}^{1/2}.
\]
This is at most $1$ for all $|f_0| \in [0,1]$ if and only if
\[
\sum_{k=1}^{\infty} r^k  \binom{d+k-2}{k-1}^{1/2} \leq 1/2.
\]
Thus, if $r \in (0,1)$ is the root of 
\[
\sum_{k=1}^{\infty} r^k  \binom{d+k-2}{k-1}^{1/2} = 1/2
\]
then $\|f_r\|_1 \leq 1$ for all $f \in \mA_d$ meaning $r \leq K(\mA_d)$.

Since $\binom{d+k-2}{k-1}^{1/2} \leq d^{(k-1)/2}$, the solution to 
\[
\sum_{k=1}^{\infty} r^k d^{(k-1)/2}   = 1/2
\]
will be strictly smaller; however here we can readily solve for 
\[
r = \frac{1}{\sqrt{d}+2}.
\]
This proves $K(\mA_d) \geq \frac{1}{\sqrt{d}+2}$ and 
completes the proof of Theorem \ref{KAlower}.

\section{Lower bound for Schur-Agler radius: Theorem \ref{SAlower}}

To prove Theorem \ref{SAlower}, our main tool is a result from 
Grinshpan et al. \cite{grinshpan} that we can essentially use
as a ``black box'' except for a minor lemma.
Given a $d$-tuple of operators $T = (T_1,\dots, T_d)$ 
and a multi-index consider 
\[
\Delta_T^{\alpha}  = \sum_{0\leq \beta\leq \alpha} (-1)^{|\beta|} T^{\beta} (T^{\beta})^*.
\]
Here we are using the component-wise partial order on multi-indices $\beta \leq \al$.
For example, if $\al = (1,2)$, then 
\[
\Delta_T^{(1,2)} = I - T_1 T_1^* - T_2T_2^* + T_1T_2 (T_1T_2)^* + T_2^2 (T_2^2)^* - T_1T_2^2(T_1T_2^2)^*.
\]
We say $T$ belongs to the class $\mathcal{P}_{p,q}$ if $T$ is a $d$-tuple of 
commuting contractive operators such that
\[
\Delta_T^{\alpha} \geq 0
\]
for $\alpha = \sum_{j\ne p} e_j$ and $\alpha = \sum_{j\ne q} e_j$.

\begin{theorem}[\cite{grinshpan} Theorem 1.3]
If $T \in \mathcal{P}_{p,q}$ and $T$
consists of strict contractions, then $\|f(T)\| \leq 1$ for $f \in \mathcal{S}_d$.
\end{theorem}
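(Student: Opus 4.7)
The plan is to construct a commuting unitary dilation of $T$ on a larger Hilbert space, after which the polydisk spectral theorem together with a polynomial approximation argument will deliver $\|f(T)\|\leq 1$ for every $f\in\Sz_d$.

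\emph{Step 1 (setup).} After relabeling, assume $p=1$ and $q=2$, so $\al_1 = e_2+e_3+\cdots+e_d$ and $\al_2 = e_1+e_3+\cdots+e_d$. The hypothesis $\Delta_T^{\al_1}\geq 0$ is then exactly the Brehmer defect condition at the all-ones multi-index for the sub-tuple $(T_2,T_3,\dots,T_d)$, and $\Delta_T^{\al_2}\geq 0$ is the analogous condition for $(T_1,T_3,\dots,T_d)$. In particular, each $\Delta_T^{\al_j}$ admits a positive square root whose range will serve as a defect space for the forthcoming dilation.

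\emph{Step 2 (And\^o-type isometric extensions).} Bundle the remaining coordinates into $R = \prod_{j\geq 3} T_j$, viewed as a single contraction commuting with $T_1$ and $T_2$. The positivity of $\Delta_T^{\al_1}$ is precisely what is needed to produce, via a Sch\"affer-type construction, commuting isometric extensions of the pair $(T_2,R)$ on an enlarged Hilbert space; symmetrically, $\Delta_T^{\al_2}\geq 0$ yields commuting isometric extensions of $(T_1,R)$. The crucial task is then to fit both extensions onto a common enlarged space so that the extended versions of $T_1$, $T_2$, and $R$ all mutually commute. Extracting the inflated coordinate operators $\widehat T_1,\dots,\widehat T_d$ from the extended $T_1$, $T_2$, and the factorization of $\widehat R$ gives a commuting isometric dilation of $T$.

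\emph{Step 3 (unitary dilation and functional calculus).} Apply Sz.-Nagy coordinate-wise to upgrade the commuting isometric extensions to a commuting unitary dilation $U=(U_1,\dots,U_d)$ on a still larger Hilbert space, so that $T^\al = P_{\mH} U^\al|_{\mH}$ for every $\al\in\N^d$. The joint spectral theorem for commuting unitaries then defines $f(U) = \int_{\T^d} f\, dE$ against a joint spectral measure $E$ supported on $\T^d$, with $\|f(U)\|\leq \|f\|_\infty\leq 1$. Because the $T_j$ are \emph{strict} contractions, the power series $\sum_\al f_\al T^\al$ converges in operator norm, equals $\lim_{r\nearrow 1} f_r(T)$ (each $f_r$ a uniform limit of polynomials on $\cpd$), and agrees with the compression $P_{\mH} f(U)|_{\mH}$, giving $\|f(T)\|\leq 1$.

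\emph{Main obstacle.} The heart of the argument is Step 2: simultaneously making the two And\^o-style isometric dilations (one supplied by the $\al_1$-positivity, the other by the $\al_2$-positivity) commute with the extension of the packet $R$ on a common enlarged space. This is the higher-dimensional analogue of And\^o's original commuting-dilation bookkeeping, and the two specific positivity hypotheses built into the definition of $\mcp_{p,q}$ are exactly the ingredients needed so that the cross-commutators close up. Once that compatibility is secured, the remainder of the argument is standard dilation theory.
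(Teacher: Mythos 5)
First, a point of context: the paper does not prove this statement at all --- it is quoted from \cite{grinshpan} (Theorem 1.3) and used as a black box --- so the only question is whether your sketch would itself constitute a proof. It would not. Step 2, the only step with real content, is asserted rather than proved, and its supporting claims are off. The hypothesis $\Delta_T^{\al_1}\geq 0$ with $\al_1=\sum_{j\neq p}e_j$ is an inclusion--exclusion condition on the individual operators $T_2,\dots,T_d$; it is not the two-variable defect condition for the pair $(T_2,R)$ with $R=\prod_{j\geq 3}T_j$, and in any case no positivity hypothesis is needed to dilate a commuting pair of contractions isometrically --- And\^o's theorem \cite{Ando} is unconditional. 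So the hypotheses are not ``exactly what is needed'' for your pair dilations, and your sketch never explains how they enter at the point where the whole difficulty lives: making the two And\^o-type extensions and the lift of $R$ commute on a single space, and then recovering commuting isometric lifts $\widehat T_3,\dots,\widehat T_d$ of the individual coordinates from a lift of the product $R$. An isometric lift of a product does not in general factor into commuting lifts of its factors; if that mechanism worked without genuinely using the $\mcp_{p,q}$ positivity, one could group variables in an arbitrary commuting tuple and produce commuting isometric dilations in general, contradicting $\Sz_d\neq\mA_d$ for $d>2$ \cite{varo}, which the paper itself recalls.

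Moreover, your route, if completed, would prove something strictly stronger than the cited theorem: that every tuple in $\mcp_{p,q}$ consisting of strict contractions admits a commuting unitary dilation. That is not what \cite{grinshpan} establishes, and their argument avoids constructing a joint dilation altogether: it is function-theoretic, resting (ultimately via And\^o) on an Agler-type decomposition of $1-f(z)\overline{f(w)}$ in which the factors $\prod_{j\neq p}(1-z_j\overline{w_j})$ and $\prod_{j\neq q}(1-z_j\overline{w_j})$ appear against positive kernels; evaluating this decomposition hereditarily at $T$ is precisely where the two hypotheses $\Delta_T^{\al_1},\Delta_T^{\al_2}\geq 0$ are used (strictness of the contractions handling convergence), yielding $I-f(T)f(T)^*\geq 0$ directly. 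Your Step 3 is standard and fine as far as it goes, but the substance of the theorem lies entirely in the part you left unproved.
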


The condition that $T$ belongs to $\mathcal{P}_{p,q}$ is implied by a simpler and more quantitative condition.

\begin{lemma}
If $S \subseteq \{1,\dots, d\}$ and $\alpha = \sum_{j\in S} e_j$ then
\[
\sum_{j\in S} T_{j} T_j^* \leq I
\]
implies $\Delta_T^{\alpha} \geq 0$.
In particular, if $\|T_j\| \leq 1/\sqrt{\#S}$ for $j\in S$ then $\Delta_T^{\alpha} \geq 0$.
\end{lemma}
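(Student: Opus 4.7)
The ``in particular'' clause is immediate and I would dispatch it first: $\|T_j\| \le 1/\sqrt{\#S}$ for $j \in S$ gives $T_j T_j^* \le (1/\#S)\, I$, and summing over the $\#S$ indices yields $\sum_{j\in S} T_j T_j^* \le I$, so it suffices to prove the first implication.

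For the main implication the plan is to exhibit an explicit sum-of-positives decomposition of $\Delta_T^{\alpha}$. After relabeling, take $S = \{1,\dots,k\}$ and try the ansatz
\[
\Delta_T^{\alpha} \;=\; \sum_{B \subseteq S} c_{|B|}\, T_B \Bigl( I - \sum_{j \in S \setminus B} T_j T_j^* \Bigr) T_B^*,
\]
with scalar coefficients $c_m$ depending only on $m = |B|$. Under the hypothesis $\sum_{j\in S} T_j T_j^* \le I$ each inner factor $I - \sum_{j\in S\setminus B} T_j T_j^*$ is positive semidefinite; so as long as $c_m \ge 0$ for every $m$, every summand $T_B(\cdot)T_B^*$ is positive and $\Delta_T^{\alpha} \ge 0$ follows at once.

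To pin down the $c_m$ and verify the identity, I would expand using the commutativity identity
\[
T_B\, T_j T_j^*\, T_B^* \;=\; T_{B\cup\{j\}}\, T_{B\cup\{j\}}^* \qquad (j \notin B),
\]
which follows from $T_B T_j = T_j T_B$ (and hence $T_j^* T_B^* = T_B^* T_j^*$) since both sides equal $(T_{B\cup\{j\}})(T_{B\cup\{j\}})^*$. Collecting coefficients of $T_{B'} T_{B'}^*$ on the right-hand side of the ansatz and matching against $\Delta_T^{\alpha} = \sum_{B'} (-1)^{|B'|} T_{B'} T_{B'}^*$ produces the recurrence $c_m - m\, c_{m-1} = (-1)^m$ with $c_0 = 1$. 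This is the classical recurrence of the subfactorials (derangement numbers) $!m$, which are all nonnegative ($!0 = 1,\, !1 = 0,\, !2 = 1,\, !3 = 2,\, !4 = 9,\dots$), so the ansatz succeeds and each term in the resulting decomposition is positive semidefinite.

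The main obstacle is guessing the correct symmetric ansatz; once it is written down, the verification is a routine coefficient count. A naive peeling induction, for instance via the reduction $\Delta_T^{\alpha} = \Delta_{T'}^{\alpha'} - T_k \Delta_{T'}^{\alpha'} T_k^*$ with $T' = (T_1,\dots,T_{k-1})$ and $\alpha' = \alpha - e_k$, does not close, because $T_k \Delta_{T'}^{\alpha'} T_k^*$ need not be dominated by $\Delta_{T'}^{\alpha'}$ (the defect $I - T_k T_k^*$ refuses to commute past $T_B^*$). The symmetric weighting $c_{|B|} = !|B|$ encodes exactly the bookkeeping that sidesteps this commutativity obstruction.
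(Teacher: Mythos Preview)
Your argument is correct, but it takes a different route from the paper, and your closing remark is slightly off. The paper uses precisely the ``peeling induction'' you dismissed, made to close by strengthening the induction hypothesis: with $v_k = e_1+\cdots+e_k$ one proves the sandwich
\[
I - \sum_{j=1}^{k} T_j T_j^* \;\le\; \Delta_T^{v_k} \;\le\; I
\]
by induction on $k$. From $\Delta_T^{v_{k+1}} = \Delta_T^{v_k} - T_{k+1}\Delta_T^{v_k}T_{k+1}^*$, the upper bound $\Delta_T^{v_k}\le I$ gives $T_{k+1}\Delta_T^{v_k}T_{k+1}^*\le T_{k+1}T_{k+1}^*$, hence $\Delta_T^{v_{k+1}}\ge \Delta_T^{v_k}-T_{k+1}T_{k+1}^*\ge I-\sum_{j\le k+1}T_jT_j^*$; and $\Delta_T^{v_k}\ge 0$ (which follows from the lower bound and the hypothesis) gives $\Delta_T^{v_{k+1}}\le \Delta_T^{v_k}\le I$. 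So the induction closes in two lines once the hypothesis is properly loaded; the obstruction you identified disappears because one never needs to pull $T_k^*$ past $T_B$.

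Your explicit subfactorial decomposition is a nice alternative: it displays $\Delta_T^{\alpha}$ directly as a nonnegative combination of conjugated defects, and the coefficient recurrence $c_m - m\,c_{m-1}=(-1)^m$ with solution $c_m={!m}\ge 0$ checks out. It is a bit more work than the paper's two-line induction but yields more structural information.
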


\begin{proof}
There is no harm in assuming $S = \{1,\dots, k\}$.  Set $v_k = \sum_{j=1}^{k} e_j$.
We prove by induction on $k$ that $I-\sum_{j=1}^{k} T_j (T_j)^* \leq \Delta_T^{v_k} \leq I$.
For $k=1$, the claim is immediate.
Next, assuming the claim for $k$, we have
\[
\Delta_{T}^{v_{k+1}} = \Delta_{T}^{v_k} - T_{k+1} \Delta_{T}^{v_k} (T_{k+1})^* 
\geq I - \sum_{j=1}^{k} T_j(T_j)^* - T_{k+1} (T_{k+1})^* 
\]
and because $\Delta_{T}^{v_k} \geq 0$
\[
\Delta_{T}^{v_{k+1}} \leq I.
\]
\end{proof}

To prove Theorem \ref{SAlower} we simply remark that for a $d$-tuple $T$ of contractions
\[
\sum_{j\ne p} \frac{1}{d-1} T_j (T_j)^* \leq I
\]
implying that for $\tilde{T} = \frac{1}{\sqrt{d-1}} T$, 
$\Delta^{\al}_{\tilde{T}} \geq 0$ for $\al = \sum_{j\ne p} e_j$.
A similar conclusion holds for $\al = \sum_{j\ne q} e_j$.
Thus, for $r = \frac{1}{\sqrt{d-1}}$ and $f \in \Sz_d$ we have
\[
\| f_r(T)\| = \|f(\tilde{T})\| \leq 1.
\]
Thus, $SA_d \geq \frac{1}{\sqrt{d-1}}$, completing the proof.

Note that \cite{drexelnorm} constructs a homogeneous polynomial $p$ in $3$ variables
with degree $2$ and ratio $\frac{\|p\|_A}{\|p\|_\infty} \geq 1.23$ implying
\[
SA_3 \leq \frac{1}{\sqrt{1.23}} \leq 0.902
\]
which we compare to the lower bound $SA_3 \geq \frac{1}{\sqrt{2}} \geq 0.707$.
See Remark 6.2 of \cite{drexelnorm}.

\section{Upper bound for Bohr-Agler radius: Theorem \ref{KAupper}}

To prove Theorem \ref{KAupper}, we use a construction
essentially contained in Maurizi-Queffelec \cite{MQ}.
Let $q,m$ be positive integers.
Let $A_q$ be the $q\times q$ matrix $A_q = (\omega^{ij})_{i,j =0,\dots, q-1}$
where $\omega = \exp(2\pi i/q)$.  Then,
\[
A_q^* A_q  = q I \text{ and } \|A_q\| = \sqrt{q} \text{ (using operator norm)}.
\]
Define $q$-tuples of variables $z^{(j)} = (z_1^{(j)},\dots, z_q^{(j)})$ for $j=1,\dots, m$.
Let $\Delta(z_1,\dots, z_q)$ be the matrix with $z_1,\dots, z_q$ on the diagonal.
Define
\begin{equation} \label{Pdef}
P(z^{(1)},\dots, z^{(m)}) = e_1^t A_q \Delta(z^{(1)}) \cdots A_q \Delta(z^{(m)}) \mathbbm{1}
\end{equation}
where $e_1 = (1,0,\dots ,0)^t$ and $\mathbbm{1} \in \C^q$ is the all ones vector.

\begin{lemma} 
The polynomial $P$ from \eqref{Pdef} is homogeneous of degree $m$ and satisfies
\[
\|P\|_1 = q^m \text{ and } \|P\|_{A} \leq q^{(m+1)/2}.
\]
\end{lemma}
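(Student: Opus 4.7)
The plan is to extract both quantities directly from the matrix-product structure defining $P$. Homogeneity of degree $m$ is immediate: each factor $\Delta(z^{(j)})$ is linear in the variables $z^{(j)}$ while $A_q$, $e_1$, and $\mathbbm{1}$ are constants, so the product is multilinear across the $m$ variable groups and has total degree $m$.

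For $\|P\|_1$, the key initial observation is that the first row of $A_q$ consists entirely of $1$'s, i.e.\ $e_1^t A_q = \mathbbm{1}^t$. Expanding the remaining product in coordinates gives
\[
P = \sum_{k_1,\dots,k_m=1}^{q} \left(\prod_{j=1}^{m-1} \omega^{(k_j-1)(k_{j+1}-1)}\right) z_{k_1}^1 z_{k_2}^2 \cdots z_{k_m}^m.
\]
Because the variables in different groups $z^{(1)},\dots,z^{(m)}$ are disjoint sets of indeterminates, the tuple $(k_1,\dots,k_m)$ can be read off from the monomial. Therefore the $q^m$ terms produce $q^m$ distinct monomials, each with unimodular coefficient, and $\|P\|_1 = q^m$ follows.

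For the Agler norm, I would substitute commuting strict contractions $T^{(j)} = (T_1^j,\dots,T_q^j)$ and interpret the product via the identification $\mH^q \cong \C^q \otimes \mH$. In this picture $A_q$ acts as $A_q \otimes I_{\mH}$ with operator norm $\sqrt{q}$; the block-diagonal operator $\Delta(T^{(j)})$ has norm $\max_k \|T_k^j\| \leq 1$; the vector $\mathbbm{1} \in \C^q$ has norm $\sqrt{q}$; and $e_1$ has norm $1$. Writing $\mathbbm{1}\otimes\xi = (\xi,\dots,\xi) \in \mH^q$ for $\xi \in \mH$,
\[
P(T)\xi = e_1^t\bigl(A_q \Delta(T^{(1)}) \cdots A_q \Delta(T^{(m)}) (\mathbbm{1}\otimes \xi)\bigr),
\]
and chaining the norms through the product, with the $m$ copies of $A_q$ contributing $(\sqrt{q})^m$ and the $\Delta$ factors contributing at most $1$, yields $\|P(T)\xi\| \leq \sqrt{q}\cdot(\sqrt{q})^m\cdot\|\xi\| = q^{(m+1)/2}\|\xi\|$. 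Taking the supremum over commuting strict contractions gives $\|P\|_A \leq q^{(m+1)/2}$.

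There should be no serious obstacles. The only subtlety in the $\ell^1$ step is the distinctness of the monomials, which is trivial once one observes that $z^{(1)},\dots,z^{(m)}$ are disjoint variable sets. I would also note that the Agler-norm estimate never uses the commutativity of the $T_k^j$'s—the operator-norm bound is valid for arbitrary (not necessarily commuting) contractions—so the upper bound applies uniformly across the Agler class.
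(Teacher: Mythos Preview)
Your proof is correct and follows essentially the same approach as the paper's: expand the matrix product to see that the $q^m$ monomials are distinct with unimodular coefficients, then bound $\|P(T)\|$ by tensoring with $I_{\mH}$ and chaining the operator norms $\|A_q\otimes I\|=\sqrt{q}$, $\|\Delta(T^{(j)})\|\le 1$, $\|\mathbbm{1}\|=\sqrt{q}$. Your preliminary simplification $e_1^t A_q=\mathbbm{1}^t$ and the closing remark that commutativity is never used are pleasant additions but do not change the argument.
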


Recall the coefficient-wise $\ell^1$ norm $\|\cdot\|_1$ from \eqref{ell1} and the Agler norm $\|\cdot\|_{A}$
from  \eqref{Aglernorm}.

\begin{proof}
Denote the entries of $A_q$ by $A_{ij}$.
Fully expanding $P(z)$ we have 
\[
\sum_{i_1,\dots, i_m} A_{1i_1}z^{(1)}_{i_1} A_{i_1,i_2} z^{(2)}_{i_2} \cdots A_{i_{m-1}, i_{m}} z^{(m)}_{i_m} 
\]
and since the entries of $A$ are unimodular we see that all nonzero coefficients of $P$ are unimodular.
There are $q^m$ such coefficients so we have $\|P\|_1 = q^m$.

To estimate $\|P\|_{A}$ we can essentially insert an operator tuple into $P$'s formula in block form.
For an operator $qm$-tuple, $T = (T^{(1)}, \dots, T^{(m)})$ where each $T^{(j)}$ is itself a $q$-tuple
of operators on a Hilbert space $\mathcal{H}$ we have 
\[
P(T) =  (e_1^t\otimes I) (A_q\otimes I) \Delta(T^{(1)})\cdots (A_q\otimes I) \Delta(T^{(m)}) (\mathbbm{1}\otimes I)
\]
where $\Delta(T^{(j)})$ is the block diagonal operator with $T_1^{(j)},\dots, T_q^{(j)}$ on the diagonal.
More formally, we have $\Delta(T^{(j)}) := \sum_{i=1}^{q} \Delta(e_i)\otimes T_i^{(j)}$.
This formula could be verified by simply multiplying out and checking that it agrees with the fully
expanded form for $P$.
Since $\|A_q\|=\sqrt{q}$, $\|\mathbbm{1}\|=\sqrt{q}$ and $\|\Delta(T^{(j)})\| \leq 1$
we see that
\[
\|P(T)\| \leq q^{(m+1)/2}
\]
since $A_q$ occurs $m$ times and $\mathbbm{1}$ occurs once.
This proves $\|P\|_{A} \leq q^{(m+1)/2}$.
\end{proof}

To prove Theorem \ref{KAupper}, we take the $P$ constructed above and
estimate
\[
\frac{\| P_r\|_1}{\|P\|_A} \geq \frac{r^m q^m}{q^{(m+1)/2}} = r^m q^{(m-1)/2}.
\]
If this is greater than $1$, then $r > K(\mA_{qm})$.
So, $r > q^{(1-m)/2m}$ implies $r > K(\mA_{qm})$.  Thus,
$q^{\frac{1}{2m} - \frac{1}{2}} \geq K(\mA_{qm})$.

Since $K(\mA_d)$ is decreasing with respect to $d$, 
we can take $m= \lfloor \log d\rfloor$, $q = \left\lfloor \frac{d}{\log d} \right\rfloor$
to bound
\[
K(\mA_{qm}) \geq K(\mA_d).
\]
{\color{red}
We can write 
\[
q %= \frac{d}{\log d} - \epsilon(d) 
%= \frac{d}{\log d}\left(1- \frac{\epsilon(d) \log d}{d}\right) \text{ for } 0\leq \epsilon(d)<1
=\frac{d}{\log d}\left(1 +O\left( \frac{\log d}{d}\right)\right)
\quad\text{ and }\quad
m %= \log d -\delta(d) = \log d\left(1- \frac{\delta(d)}{\log d}\right) \text{ for } 0\leq \delta(d)<1.
=
\log d\left( 1+ O\left(\frac{1}{\log d}\right)\right)
\]
and proceed to estimate $\left(\frac{1}{2m} - \frac{1}{2}\right) \log q$.
We have
\[
\log q 
%= \log d - \log\log d + \log\left(1- \frac{\epsilon(d)\log d}{d}\right)
=
\log d - \log\log d + O\left(\frac{\log d}{d}\right)
\]
and
\[
\frac{1}{2m} - \frac{1}{2}
%=
%-\frac{1}{2} + \frac{1}{2 \log d} \frac{1}{1- \frac{\delta(d)}{\log d}}
%=
%-\frac{1}{2} + \frac{1}{2 \log d} \left( 1 + O\left(\frac{1}{\log d}\right)\right)
= -\frac{1}{2} + \frac{1}{2 \log d} + O\left(\frac{1}{(\log d)^2}\right)
\]
so that
\begin{align*}
\left(\frac{1}{2m} - \frac{1}{2}\right) \log q
&= \frac{1}{2} + \frac{1}{2}\log\left(\frac{\log d}{d}\right)
 - \frac{\log\log d}{2 \log d} + O\left(\frac{1}{\log d}\right)\\
& \leq
 \frac{1}{2} + \frac{1}{2}\log\left(\frac{\log d}{d}\right)
 + O\left(\frac{1}{\log d}\right)
\end{align*}

and therefore
\[
q^{\frac{1}{2m} -\frac{1}{2}}
\leq
e^{1/2} \sqrt{\frac{\log d}{d}} \exp\left(O\left(\frac{1}{\log d}\right)\right).
\]

Since $K(\mA_d) \geq K_d \sim \sqrt{\frac{\log d}{d}}$ we see that 
\[
1 \leq \liminf_{d\to \infty} \frac{K(\mA_d)}{\sqrt{\frac{\log d }{d}}} 
\leq \limsup_{d\to \infty} \frac{K(\mA_d)}{\sqrt{\frac{\log d }{d}}} \leq e^{1/2}.
\]
This proves Theorem \ref{KAupper}.
}

%It is then routine (but perhaps tedious) to show
%\[
%q^{\frac{1}{2m} - \frac{1}{2}} \leq \sqrt{\frac{\log d}{d}} \exp\left(O\left(\frac{1}{\log d}\right)\right).
%\]
%
%

\section{Upper bound for Schur-Agler radius: Theorem \ref{SAupper}}

%The paper \cite{galicer} of Galicer et al. focused on estimating the growth
%of a variety of norms of homogeneous polynomials associated to von Neumann
%inequalities.
Recalling \eqref{Ckd}, Dixon obtained upper and lower bounds for $C_k(d)$.
The lower bound stated that there exists a constant $c_k$ such that
\begin{equation} \label{Ckdlower}
C_{k}(d) \geq c_k d^{\frac{1}{2}\lfloor \frac{k-1}{2}\rfloor}.
\end{equation}
See Theorem 1.2(b) of \cite{dixon}.
We will discuss the upper bound in the next section.
\iffalse
 they obtained the estimate $C_k(d) \approx_{k} d^{\frac{k-2}{2}}$ 
meaning that $C_k(d)/d^{\frac{k-2}{2}}$
is bounded above and below by constants depending on $k$.  
(Note that the paper \cite{galicer} uses the notation $\sim$ which is usually reserved 
for a true asymptotic.)
The upper bound of $C_k(d)$ came from Dixon \cite{dixon} while the lower bound was a 
new contribution in \cite{galicer}.  
\fi
In order to estimate $SA_d$ we need to control the constant $c_k$ and subsequently
need to dig into the proofs in \cite{dixon}.
The proof of \eqref{Ckdlower} used the Kahane-Salem-Zygmund inequality
and a lemma about partial Steiner systems.

To define partial Steiner systems, let $[d]=\{1,\dots, d\}$ and call
a subset of $[d]$ of cardinality $k$, a $k$-subset.
Given integers $1\leq t\leq k \leq d$, a \emph{partial Steiner system} $S_p(t,k,d)$ is a 
collection of $k$-subsets $A_1,\dots, A_N \subset [d]$ such that
any $t$-subset of $[d]$ is contained in at most one of the sets $A_j$ for $j=1,\dots, N$.
The largest $N$ such that there exists a partial Steiner system $S_p(t,k,d)$ will be 
denoted $N(t,k,d)$.  By a pigeonhole argument
\[
N(t,k,d) \leq \frac{\binom{d}{t}}{\binom{k}{t}}.
\]
Lemma (3.2) of \cite{dixon} proves that for fixed $k,t$, 
there exists a constant $a_{t,k}$ such that $N(t,k,d) \geq a_{t,k} d^{t}$.
Estimates on $N(t,k,d)$ have improved dramatically over the years, however
they typically all estimate $N(t,k,d)$ for $t,k$ held \emph{fixed} and $d\to \infty$.
For instance, the result of Rödl \cite{rodl} states that for fixed $t,k$
\[
N(t,k,d)  = (1- o(1))\frac{\binom{d}{t}}{\binom{k}{t}}
\]
where the $o(1)$ term (again for fixed $t,k$) goes to $0$ as $d\to \infty$.  
For our purposes we need control over $N(t,k,d)$ for $t, k \approx \log d$.
It is not our aim to get into the combinatorics of partial Steiner systems,
so it is fortunate that the proof of Lemma (3.2) in \cite{dixon} provides the explicit estimate
\[
N(t,k,d) \geq \binom{d}{k} \binom{d}{k-t}^{-1} \binom{k}{t}^{-1}
\]
(from which the $a_{t,k} d^t$ lower bound is directly derived).
We only need to know the following crude bound which holds for $d\geq 2k$
\begin{equation} \label{Nlower}
N(t,k,d) \geq  \binom{d}{k} \binom{d}{k-t}^{-1} \binom{k}{t}^{-1} \geq 4^{-k} \left(\frac{d}{k}\right)^{t}.
\end{equation}
 
To see this, we note for $d \geq 2k \geq 2t$
\[
\begin{aligned}
\binom{d}{k} \binom{d}{k-t}^{-1} \binom{k}{t}^{-1}
%&=
%\frac{d(d-1)\cdots(d-k+1)}{k!} \frac{(k-t)!}{d(d-1)\cdots(d-k+t+1)} \binom{k}{t}^{-1}\\
%&=
%\frac{(d-k+t)\cdots (d-k+1)}{k(k-1)\cdots(k-t+1)} \binom{k}{t}^{-1}\\
&=
\left(\frac{d}{k}\right)^t \left(1- \frac{k-t}{d}\right)\cdots \left(1- \frac{k-1}{d}\right) \binom{k}{t}^{-1}\\
 &\geq \left(\frac{d}{k}\right)^t 2^{-t-k} \geq \left(\frac{d}{k}\right)^t 4^{-k}.
\end{aligned}
\]

Next, we discuss how $N(t,k,d)$ is involved in the proof of \eqref{Ckdlower}.  
Using the Kahane-Salem-Zygmund inequality and the existence
of a partial Steiner systems, \cite{dixon} proves there exists a
homogeneous polynomial $p$ in $d$ variables of odd degree $k=2n+1$ such 
that
\[
\frac{\|p\|_A}{\|p\|_{\infty}} > \frac{1}{8} \left( \frac{N(n+1, k, d)}{d \log k} \right)^{1/2}.
\]
This is directly pulled from the proof of Theorem 1.2(b) in \cite{dixon}.
For $d \geq 2k$ we have from \eqref{Nlower}
\[
\frac{\|p\|_A}{\|p\|_{\infty}} > \frac{1}{8} \left( 4^{-k} \frac{(d/k)^{n+1}}{d \log k} \right)^{1/2} 
= \frac{2^{-k}}{8} \frac{d^{n/2}}{k^{(n+1)/2} \sqrt{\log k}}.
\]

We can now complete the proof of Theorem \ref{SAupper}.
With $p$ as above $\| p_r\|_A/\|p\|_{\infty} = r^k\| p\|_A/\|p\|_{\infty} > 1$ provided
\begin{equation} \label{provided}
r > (2) 8^{1/k} \frac{ (\log k)^{1/(2k)} k^{(n+1)/(2k)}}{d^{n/2k}}.
\end{equation}
Note $k^{(n+1)/(2k)} = \exp( \frac{n+1}{2(2n+1)} \log (2n+1)) = 
2^{1/4} \exp((1/4)\log n + O(\frac{\log n}{n}))$.
Thus, $k^{(n+1)/(2k)}  \lesssim k^{1/4}$.
Note also $d^{n/2k} = \exp(\frac{n}{2(2n+1)}\log d)$ and 
$\frac{n}{2(2n+1)} = \frac{1}{4}( 1+ O(1/n))$.
If we choose $n$ comparable to $\log d$ then
$d^{n/2k} \gtrsim d^{1/4}$.  
Thus, for $n$ comparable to $\log d$, there is a constant $c$ such that the right side of 
\eqref{provided} is bounded by
\[
c \left(\frac{\log d}{d}\right)^{1/4}.
\]
So, for $r > c \left(\frac{\log d}{d}\right)^{1/4}$ we have $r > SA_d$.
Therefore, $SA_d \leq c \left(\frac{\log d}{d}\right)^{1/4}$ proving
Theorem \ref{SAupper}.

\section{Improvements to Dixon's bounds: Theorem \ref{improvedixon}}\label{sec:dixon}

Recall the constant $C_k(d)$ defined in \eqref{Ckd}.
As mentioned above, the paper \cite{dixon} contains bounds on $C_{k}(d)$
that for fixed $k$ have the correct growth with respect to $d$; namely $d^{\frac{k-2}{2}}$.  
Specifically, Dixon proves
\[
C_k(d) \leq G_{\C} (3d)^{(k-2)/2} (2e)^k
\]
where $G_{\C} \in [1.33807, 1.40491]$ is the complex Grothendieck constant (see \cite{grot}).
Using modern (and not so modern) 
improvements to the various lemmas contained in \cite{dixon}
we can improve the bound.

The two main improvements that can be introduced to the proof
are better Bohnenblust-Hille bounds for multilinear forms and
polarization constants.  We also hope that our exposition will
be of some use to the literature as \cite{dixon} and the related
paper \cite{davie} freely and implicitly use dual versions of inequalities
that are more easily stated in their original forms.  A good general
reference for what follows is \cite{Defantbook} (especially Section 6.4 and Chapter 16).  

First, the Bohnenblust-Hille inequality \cite{BH} states there is a (smallest) constant
$B_{m}$
such that for any $d$ and any $m$-multilinear form 
$L:  \C^d\times \cdots \C^d \to \C$
\[
\left(\sum_{i_1,\dots,i_m=1}^{d} |L(e(i_1),\dots, e(i_m))|^{\frac{2m}{m+1}} \right)^{\frac{m+1}{2m}}
\leq B_m \|L\|_\infty
\]
where
\[
\|L\|_{\infty} = \sup\{|L(z^{(1)},\dots, z^{(m)})|: z^{(j)} \in \D^d \text{ for } j =1,\dots, m\}
\]
and $e(i)$ represents a standard basis vector for $\C^d$---these
were earlier denoted by $e_i$ but in the interest of reducing subscripts we
use this notation here.  
There is a long and interesting 
history of the Bohnenblust-Hille inequality and
improvements on estimates for $B_m$ for which 
we refer to \cite{defant}, \cite{bayart}.
For our purposes we shall quote the best result we know of.
We caution that the paper \cite{bayart} uses a more general
notation $B^{mult}_{\C,m}$ for $B_m$ since they are interested
in a variety of related constants.

\begin{theorem}[Corollary 3.2 of \cite{bayart}] \label{Bm}
There exists a constant $\kappa >0$ such that for $m\geq 1$
\[
B_m \leq \kappa m^{\frac{1-\gamma}{2}}
\]
where $\gamma$ is the Euler-Mascheroni constant
and $\frac{1-\gamma}{2} \approx 0.21392$.
\end{theorem}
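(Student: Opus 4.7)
The plan is to prove $B_m \leq \kappa\, m^{(1-\gamma)/2}$ by a recursive argument that at each induction step reduces the $m$-linear Bohnenblust--Hille estimate to one for $(m-1)$-linear forms, at a cost controlled by the optimal Khintchine-type constants for Steinhaus (unit-modulus uniform) random variables. The appearance of the Euler--Mascheroni constant $\gamma$ in the exponent is a strong hint that the proof produces $\log B_m$ as a sum of the form $\sum 1/k$ and then invokes the expansion $\sum_{k=1}^m 1/k = \log m + \gamma + O(1/m)$.

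First I would set up an inductive inequality. Given an $m$-linear form $L$ on $\C^d\times\cdots\times\C^d$, I would single out one of the $m$ slots and, using H\"older's inequality together with a symmetric Blei-type interpolation, decompose the $\ell^{2m/(m+1)}$ sum of coefficients into a mixed norm $\ell^{p}(\ell^{q})$ that exposes an $(m-1)$-linear form in the remaining slots. Applying the induction hypothesis to those slices and using a Khintchine--Kahane inequality (in the Steinhaus setting on the torus) in the isolated slot yields a recursive relation of the schematic form
\[
B_m \leq A\!\left(\tfrac{2(m-1)}{m}\right)^{\beta_m} B_{m-1},
\]
where $A(p)$ denotes the optimal Steinhaus Khintchine constant at exponent $p$ and $\beta_m$ is a combinatorial exponent dictated by the symmetrization.

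Second I would solve the recursion by taking logarithms. Summing from $k=2$ to $m$ gives
\[
\log B_m \leq \log B_1 + \sum_{k=2}^{m} \beta_k \log A(p_k),
\]
where the exponents $p_k$ tend to $2$. Because $A(2)=1$, a Taylor expansion of Haagerup--Sawa's formula $A(p) = \sqrt{2}\,\bigl(\Gamma((p+1)/2)/\sqrt{\pi}\bigr)^{1/p}$ near $p=2$ gives $\log A(p_k) = c/k + O(1/k^2)$ for an explicit $c$. Combining with $\sum_{k=1}^m 1/k = \log m + \gamma + O(1/m)$, one obtains $\log B_m \leq \alpha \log m + O(1)$ with $\alpha = (1-\gamma)/2$ once $c$ and the $\beta_k$ are computed. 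The additive $\gamma\alpha$ shift telescopes into the multiplicative constant $\kappa$.

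The main obstacle, and the technical heart of the argument, is the combinatorial bookkeeping in the Blei-style interpolation step: one must distribute indices across the $m$ slots so that $\beta_m$ combined with the leading behaviour of $\log A(p_k)$ at $p=2$ produces \emph{exactly} the sharp coefficient $(1-\gamma)/2$ rather than a worse exponent. A secondary subtlety is that the $O(1)$ errors at each inductive step must not cascade into an unbounded correction; this is guaranteed by the summability of the $O(1/k^2)$ remainder in the expansion of $\log A(p_k)$, so the bound $\kappa$ is finite even though the induction is carried out infinitely often.
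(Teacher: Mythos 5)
This statement is not proved in the paper at all: it is quoted verbatim as Corollary 3.2 of \cite{bayart}, so the only meaningful comparison is with the argument given there. Your overall architecture --- induction on $m$, a Blei/mixed-norm step that isolates slots, Khintchine-type inequalities at the exponents $p_m=\tfrac{2(m-1)}{m}$, and a telescoping sum of logarithms --- does match the shape of the Bayart--Pellegrino--Seoane proof. But the quantitative core, which is exactly what produces the exponent $\tfrac{1-\gamma}{2}$, is wrong or missing in your sketch. First, the constants that must enter are the optimal Khintchine constants for \emph{Steinhaus} (unit-modulus, complex) variables, $A_p=\Gamma\bigl(\tfrac{p}{2}+1\bigr)^{1/p}$; the formula you quote, $\sqrt{2}\bigl(\Gamma(\tfrac{p+1}{2})/\sqrt{\pi}\bigr)^{1/p}$, is Haagerup's optimal constant for \emph{real Rademacher} variables, and the difference is not cosmetic. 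Expanding at $p=2$, the Steinhaus constants give $\log A_p^{-1}\approx\tfrac{1-\gamma}{4}(2-p)$ because $\psi(2)=1-\gamma$, and with $2-p_k\approx 2/k$ and per-step exponent $1$ the sum is $\tfrac{1-\gamma}{2}\log m+O(1)$; the same computation with Haagerup's formula yields $\tfrac{2-\gamma-\log 2}{4}(2-p)$ per step, hence only a bound of order $m^{(2-\gamma-\log 2)/2}\approx m^{0.36}$, strictly weaker than the statement.

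Second, your stated mechanism for the appearance of $\gamma$ is a misdiagnosis: the Euler--Mascheroni constant in the \emph{exponent} comes from the digamma value $\psi(2)=1-\gamma$ in the Taylor expansion of the Khintchine constant at $p=2$, not from $\sum_{k\le m}1/k=\log m+\gamma+O(1/m)$; the $\gamma$ in the harmonic sum only shifts the multiplicative constant $\kappa$. Third, you leave the per-step exponents $\beta_k$ undetermined, yet the entire difficulty in \cite{bayart} is to arrange the mixed-norm/interpolation step so that each Khintchine constant enters with exponent exactly $1$, giving a bound of the form $B_m\le\prod_{j=2}^{m}\Gamma\bigl(2-\tfrac1j\bigr)^{\frac{j}{2-2j}}$; if the $\beta_k$ grow with $k$, as in the older Davie--Kaijser-type arguments, one only obtains exponential bounds in $m$. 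As written, your proposal asserts rather than derives the exponent $\tfrac{1-\gamma}{2}$, so there is a genuine gap at the decisive step.
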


Next, the polarization inequality states that there
is a constant $Pol_m$ such that for any $d$ and for
any symmetric $m$-multilinear form $L:  \C^d\times \cdots \C^d \to \C$
we have 
\[
\|L\|_{\infty} \leq Pol_{m} \sup\{|L(z,\dots, z)|: z \in \D^d\}.
\]
Usually, this is stated in terms of starting with an $m$-homogeneous
polynomial $P \in \C[z_1,\dots, z_d]$ 
\[
P(z) = \sum_{|\al|=m} P_{\al} z^{\al}.
\]
We can write the monomials in terms of ``words'' using the notations
$W(m,d)$ (see \eqref{words}) and $\al(w)$ (see \eqref{alw}) 
from Section \ref{sec:KAlower}
\[
P(z) = \sum_{w \in W(m,d)} \frac{P_{\al(w)}}{\binom{m}{\al(w)}} z_w
\]
where $z_w = z_{w(1)}\cdots z_{w(m)}$.
(The number of words $w$ with $\al(w) = \al$ is $\binom{m}{\al}$.)
Then,
\[
Sym(P)(z^{(1)},\dots, z^{(m)}) := \sum_{w \in W(m,d)} \frac{P_{\al(w)}}{\binom{m}{\al(w)}} z^{(1)}_{w(1)} \cdots z^{(m)}_{w(m)}
\]
is a symmetric $m$-multilinear form with $Sym(P)(z,\dots ,z) = P(z)$.

A theorem of Harris gives a bound on $Pol_m$.  See also Dineen \cite{dineen} (Propositions 1.43, 1.44).

\begin{theorem}[Corollary 5 of \cite{harris}] \label{polar}
\[
Pol_m \leq \frac{m^{\frac{m}{2}} (m+1)^{\frac{m+1}{2}}}{2^m m!}.
\]
\end{theorem}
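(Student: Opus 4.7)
The plan is to prove Harris's polarization inequality by combining the Cauchy-type polarization formula with a sharp extremal estimate for coefficients of homogeneous polynomials. Given a symmetric $m$-multilinear form $L$ on $\C^d$ with associated diagonal polynomial $P(z)=L(z,\ldots,z)$, the multinomial expansion of $P(\sum_j t_j z_j)$ shows that $m!\,L(z_1,\ldots,z_m)$ equals the coefficient of $t_1\cdots t_m$. Extracting this coefficient via a Cauchy integral over $\T^m$, rescaled to radii $r_j>0$, yields
\[
m!\,r_1\cdots r_m\,L(z_1,\ldots,z_m) = \int_{\T^m}\overline{w_1\cdots w_m}\,P\!\left(\sum_{j=1}^m r_j w_j z_j\right) d\sigma(w).
\]
The naive bound $|P(\sum r_j w_j z_j)|\leq(\sum r_j)^m\|P\|_\infty$ followed by AM-GM optimization recovers only the classical $Pol_m\leq m^m/m!$.

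To reach Harris's sharper constant, I would exploit the fact that $q(w):=P(\sum w_j z_j)$ is an $m$-homogeneous polynomial on $\C^m$ whose supremum on the Euclidean unit ball $B_2^m$ is controlled via Cauchy-Schwarz:
\[
\|q\|_{B_2^m}\leq m^{m/2}\|P\|_\infty,
\]
since $z_j\in\cpd$ implies $\|\sum w_j z_j\|_\infty\leq\sum|w_j|\leq\sqrt{m}\,\|w\|_2$. The task then reduces to bounding the $w_1\cdots w_m$ coefficient of an $m$-homogeneous polynomial in $m$ variables with controlled sup norm on $B_2^m$; this is the classical extremal problem for symmetric multilinear forms restricted to the Euclidean sphere, and it is the step that accounts for the $m^{m/2}$ factor in the final constant.

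The factor $2^{-m}$ in Harris's bound arises from the real polarization identity
\[
L(z_1,\ldots,z_m) = \frac{1}{2^m m!}\sum_{\varepsilon\in\{-1,1\}^m}\varepsilon_1\cdots\varepsilon_m\,P\!\left(\sum_j\varepsilon_j z_j\right),
\]
and the remaining factor $(m+1)^{(m+1)/2}$ emerges by replacing the discrete sign set $\{-1,1\}^m$ with a continuous family of weights parametrized by the positive orthant of a sphere in $\R^{m+1}$; the resulting spherical integral evaluates to a beta-function expression whose constrained minimum (via Lagrange multipliers, attained when all the weights are equal) yields precisely the exponent $(m+1)/2$.

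The main obstacle is carrying out this joint optimization cleanly to match the specific constants: a naive Parseval bound on $S^{2m-1}$ alone produces a weaker constant of order $(2\sqrt{e})^{m}$ rather than Harris's $(e/2)^m$, so the sphere parametrization together with the sign symmetrization must be executed in tandem. The cleanest route, which I would ultimately follow, is to track the argument in Harris's original paper \cite{harris} or the exposition in Propositions 1.43-1.44 of Dineen \cite{dineen}; the latter iterates a one-variable Chebyshev extremal identity for the disk, with the factor $2^{m}$ appearing naturally as the leading coefficient of the degree-$m$ Chebyshev polynomial and the factor $(m+1)^{(m+1)/2}$ from an $(m+1)$-variable power-mean optimization.
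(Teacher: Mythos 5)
First, note that the paper does not prove this statement at all: it is imported verbatim as Corollary 5 of \cite{harris}, with Dineen's Propositions 1.43--1.44 \cite{dineen} cited as an alternative source. So there is no internal argument to compare with, and your final fallback (``track the argument in Harris or Dineen'') is in effect what the paper itself does. The issue is that, as a standalone proof, your sketch has a genuine gap, and in fact its one concrete reduction provably cannot reach Harris's constant.

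The problematic step is the passage through the Euclidean ball. You bound $q(w):=P\bigl(\sum_j w_j z_j\bigr)$ by $\sup_{B_2^m}|q|\leq m^{m/2}\|P\|_\infty$ (correct, via $\sum_j|w_j|\leq\sqrt{m}\,\|w\|_2$), and then propose to finish by the extremal problem ``how large can the coefficient of $w_1\cdots w_m$ be for an $m$-homogeneous polynomial bounded by $1$ on $B_2^m$.'' But the sharp answer to that extremal problem is exactly $m^{m/2}$, attained by $q(w)=m^{m/2}w_1\cdots w_m$ (its supremum on $B_2^m$ is $1$ at $|w_j|=1/\sqrt m$, and a Cauchy estimate over the polytorus of radius $1/\sqrt m\,$, which sits inside $B_2^m$, shows no larger coefficient is possible). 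Feeding this back through $m!\,|L(z_1,\dots,z_m)|\leq |\mathrm{coeff}|$ gives $Pol_m\leq m^{m/2}\cdot m^{m/2}/m! = m^m/m!$, i.e.\ precisely the classical bound you set out to improve; the factor $m^{m/2}$ does not ``account for'' part of Harris's constant, it cancels against the loss in the reduction. The remaining ingredients that would have to supply the improvement --- the sign symmetrization combined with a continuous family of weights on a sphere in $(m+1)$ variables, the ``beta-function expression'' whose minimization is claimed to produce $(m+1)^{(m+1)/2}$, and the attribution of the $2^m$ to the leading coefficient of the Chebyshev polynomial (which is $2^{m-1}$, not $2^m$, and is not how Dineen's proof goes) --- are asserted rather than carried out. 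So the proposal does not constitute a proof of the stated bound; to use it in place of the citation you would need to actually reproduce Harris's optimization (which exploits the special structure of $q$, not the general coefficient problem on $B_2^m$), or simply keep the statement as a quoted result, as the paper does.
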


Next we need to give a dual version of the Bohnenblust-Hille inequality.
The projective tensor norm on the space of $m$-multilinear 
forms on $\C^d$ is given by
\[
\|L\|_V = \inf\{ \sum_{r=1}^{N} |\lambda_r|\}
\]
where the infimum is taken over all possible representations
of 
\[
L = \sum_{r=1}^{N} \lambda_r f^{(1)}_r\otimes \cdots \otimes f^{(m)}_r
\]
where each $f^{(j)}_r: \C^d \to \C$ is a linear functional with
$\|f^{(j)}_r\|_{\infty} = \sup_{z\in \D^d} |f^{j}_r(z)| \leq 1$.
Here
\[
(f^{(1)}_r\otimes \cdots \otimes f^{(m)}_r)(z^{(1)},\dots, z^{(m)})
 = 
 f^{(1)}_r(z^{(1)}) \cdots  f^{(m)}_r(z^{(m)}).
 \]
 The explicit form of $\|L\|_V$ will only be used once; more importantly,
 this norm is dual to the $\infty$ norm.
 Let us write $e(w):= e(w(1)) \otimes\cdots \otimes e(w(m))$ and
 $L_w = L(e_w)$ for short so that 
 $L = \sum_{w\in W(m,d)} L_w e(w)$.
 Then,
  \[
 \|L\|_V = \sup\{ |\sum_{w \in W(m,d)} L_w F_w|\}
\]
with the supremum taken over $m$-multilinear $F = \sum_w F_w e(w)$
such that $\|F\|_{\infty} \leq 1$.  
The Bohnenblust-Hille inequality implies
that for such $F$,
$(\sum_{w} |F_w|^{\frac{2m}{m+1}})^{\frac{m+1}{2m}} \leq B_m$, and therefore
by H\"older's inequality
\begin{equation} \label{LV}
\|L\|_V \leq B_m \left(\sum_{w} |L_w|^{\frac{2m}{m-1}}\right)^{\frac{m-1}{2m}}
\leq
B_m d^{\frac{m-1}{2}} \sup_w |L_w|
\end{equation}
since $\#W(m,d) = d^m$.

Let us now rehash Dixon's proof and insert the appropriate estimates.

\begin{proof}[Proof of Theorem \ref{improvedixon}]
To begin let $P \in \C[z_1,\dots, z_d]$ be $k$-homogeneous,
let $T$ be a $d$-tuple of strictly commuting contractive operators
on a Hilbert space $\mathcal{H}$ and let $x,y \in \mathcal{H}$ 
be unit vectors.
As above we write $P(z) = \sum_{\al} P_{\al} z^{\al}$ in symmetric 
``word'' form
\[
P(z) = \sum_{w \in W(k,d)} L_w z_w
\]
with $L_w = \frac{P_{\al(w)}}{\binom{k}{\al(w)}}$.
Set $L = Sym(P)$.
Then,
\[
\langle P(T)x,y \rangle =
\sum_{w\in W(k,d)} L_w \langle T_w x,y\rangle
\]
and separating $w$ into $v\in W(k-1,d)$, $j\in [d]$
this equals
\[
\sum_{v\in W(k-1,d),j\in [d]} L_{v,j} \langle T_v x, T_j^*y\rangle.
\]
By Grothendieck's inequality (see \cite{grot}), this is bounded by
\[
G_{\C} \sup \left\{ \left|\sum_{v\in W(k-1,d), j\in [d]} L_{v,j} G_v h_j\right|:
|G_v|,|h_j|\leq 1 \text{ for all } v,j \right\}.
\]
For $G = \sum_{v\in W(k-1,d)} G_v e(v)$ with $|G_v|\leq 1$ 
we have $\|G\|_V \leq B_{k-1} d^{\frac{k-2}{2}}$ by \eqref{LV}.
Given such a $G$, for any $\epsilon>0$ we can write
\[
G = \sum_{r=1}^{N} \lambda_r f^{(1)}_r \otimes \cdots f^{(k-1)}_{r}
\]
with $\sum_{r=1}^{N} |\lambda_r| \leq \|G\|_V +\epsilon$
and $\|f^{(j)}_{r}\|_{\infty}\leq 1$.
Then, because pairing the coefficients
of $L$ up with the coefficients of simple tensors whose 
coefficients are bounded by 1 amounts to evaluating $L$ on $(\D^d)^m$ 
we have 
\[
\left|\sum_{v\in W(k-1,d), j\in [d]} L_{v,j} G_v h_j \right|
\leq \sum_{r=1}^{N}|\lambda_r| \|L\|_{\infty} 
\leq (\|G\|_{V}+\epsilon)\|L\|_{\infty}
\leq (B_{k-1} d^{\frac{k-2}{2}} +\epsilon) \|L\|_{\infty}.
\]
Letting $\epsilon \to 0$,
\[
|\langle P(T)x,y \rangle | \leq G_{\C} B_{k-1} d^{\frac{k-2}{2}} \|(Sym(P)\|_{\infty}.
\]
Finally, using polarization $\|P\|_A \leq G_{\C} B_{k-1} d^{\frac{k-2}{2}} Pol_{k} \|P\|_{\infty}.$

Thus, $C_{k}(d) \leq G_{\C} B_{k-1} d^{\frac{k-2}{2}} Pol_{k}$.
Inserting explicit estimates (Theorems \ref{Bm} and \ref{polar})
\[
C_{k}(d) \lesssim (k-1)^{\frac{1-\gamma}{2}} d^{\frac{k-2}{2}} 
\frac{k^{\frac{k}{2}} (k+1)^{\frac{k+1}{2}}}{2^k k!}
\]
and using a crude Stirling bound $k! \gtrsim \sqrt{k} (k/e)^k$
yields $C_k(d) \lesssim (e/2)^k d^{\frac{k-2}{2}} k^{\frac{1-\gamma}{2}}$
which implies the claimed inequality.
\end{proof}

One could of course replace $k^1$ with $k^{\frac{1-\gamma}{2}}$ in our
statement of Theorem \ref{improvedixon}, but we wrote $k^1$ for
simplicity.

It is worth stating explicitly---in anticipation of potential improvements of
the various constants---the following corollary of the proof.
\begin{corollary}
\[
C_{k}(d) \leq G_{\C} B_{k-1} d^{\frac{k-2}{2}} Pol_{k}.
\]
\end{corollary}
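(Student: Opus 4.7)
The plan is to extract this corollary directly from the proof of Theorem \ref{improvedixon}, stopping one step before the explicit bounds for $B_{k-1}$ and $Pol_k$ are substituted. In other words, I would repeat the argument but keep $B_{k-1}$ and $Pol_k$ as symbolic quantities throughout, yielding the clean inequality $C_k(d) \leq G_{\C} B_{k-1} d^{(k-2)/2} Pol_k$ which then becomes a useful reference point for anyone who later improves these constants.

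Concretely, first I would take an arbitrary $k$-homogeneous $P \in \C[z_1, \dots, z_d]$ and write it in symmetric word form as $P(z) = \sum_{w\in W(k,d)} L_w z_w$ with $L = \mathrm{Sym}(P)$, then evaluate $\langle P(T)x, y\rangle$ for a $d$-tuple $T$ of commuting strict contractions and unit vectors $x, y$. Splitting each length-$k$ word into a prefix $v \in W(k-1, d)$ and a final letter $j \in [d]$ produces a bilinear-in-$(T_v x, T_j^* y)$ pairing, to which Grothendieck's inequality applies and supplies the factor $G_{\C}$ together with a reduction to scalar coefficients $(G_v)$, $(h_j)$ of modulus at most one.

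Second, I would dualize: the remaining supremum is a pairing of $L$ with the $k$-multilinear form $\sum G_v h_j\, e(v,j)$. The coefficient sequence $(G_v)$ defines a $(k-1)$-multilinear form whose projective tensor norm is controlled, via Bohnenblust--Hille and H\"older as in \eqref{LV}, by $B_{k-1} d^{(k-2)/2}$. Expanding this form in simple tensors and evaluating $L$ on each factor on $(\cd)^{k-1} \times \cd$ converts the pairing to $\|\mathrm{Sym}(P)\|_{\infty}$ multiplied by that norm.

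Third, I would apply the polarization inequality, $\|\mathrm{Sym}(P)\|_{\infty} \leq Pol_k \|P\|_{\infty}$, to finish. Taking the supremum over $T$, $x$, $y$ and then over $P$ yields the corollary. No step is truly an obstacle, since each ingredient has already appeared in the proof of Theorem \ref{improvedixon}; the only care needed is to preserve $B_{k-1}$ and $Pol_k$ symbolically rather than replacing them by their current explicit bounds, so that the statement remains useful as these constants are sharpened in the literature.
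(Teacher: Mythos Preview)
Your proposal is correct and follows exactly the paper's own route: the corollary is extracted verbatim from the proof of Theorem~\ref{improvedixon} at the point where the inequality $C_k(d)\le G_{\C}\,B_{k-1}\,d^{(k-2)/2}\,Pol_k$ is obtained, before the explicit bounds of Theorems~\ref{Bm} and~\ref{polar} are inserted.
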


We have gone through improvements to Dixon's bounds
partially as a cautionary tale that they are not sufficient
to improve the lower bound $SA_d \geq K_d$.
As far as we can tell, the Dixon bound is only sufficient for
proving $SA_d \gtrsim c d^{-1/2}$ which
we have already obtained more explicitly.

Finally, we would like to draw attention to a
related inequality recently established by Hartz \cite{hartz}.
In particular,  \cite{hartz} proves $C_k(d) \lesssim_d (\log(k+1))^{d-3}$
however since the implicit constant depends on $d$
we cannot directly apply this result here to study $SA_d$.

\begin{bibdiv}
\begin{biblist}

\bib{Agler}{article}{
   author={Agler, Jim},
   title={On the representation of certain holomorphic functions defined on
   a polydisc},
   conference={
      title={Topics in operator theory: Ernst D. Hellinger memorial volume},
   },
   book={
      series={Oper. Theory Adv. Appl.},
      volume={48},
      publisher={Birkh\"auser, Basel},
   },
   isbn={3-7643-2532-1},
   date={1990},
   pages={47--66},
   review={\MR{1207393}},
}

\bib{AMpick}{article}{
   author={Agler, Jim},
   author={McCarthy, John E.},
   title={Nevanlinna-Pick interpolation on the bidisk},
   journal={J. Reine Angew. Math.},
   volume={506},
   date={1999},
   pages={191--204},
   issn={0075-4102},
   review={\MR{1665697}},
   doi={10.1515/crll.1999.004},
}

\bib{AMbook}{book}{
   author={Agler, Jim},
   author={McCarthy, John E.},
   title={Pick interpolation and Hilbert function spaces},
   series={Graduate Studies in Mathematics},
   volume={44},
   publisher={American Mathematical Society, Providence, RI},
   date={2002},
   pages={xx+308},
   isbn={0-8218-2898-3},
   review={\MR{1882259}},
   doi={10.1090/gsm/044},
}

\bib{Anderson}{article}{
   author={Anderson, J. Milne},
   author={Dritschel, Michael A.},
   author={Rovnyak, James},
   title={Schwarz-Pick inequalities for the Schur-Agler class on the
   polydisk and unit ball},
   journal={Comput. Methods Funct. Theory},
   volume={8},
   date={2008},
   number={1-2},
   pages={339--361},
   issn={1617-9447},
   review={\MR{2419482}},
   doi={10.1007/BF03321692},
}

\bib{Ando}{article}{
   author={And\^o, T.},
   title={On a pair of commutative contractions},
   journal={Acta Sci. Math. (Szeged)},
   volume={24},
   date={1963},
   pages={88--90},
   issn={0001-6969},
   review={\MR{0155193}},
}

\bib{BTpick}{article}{
   author={Ball, Joseph A.},
   author={Trent, Tavan T.},
   title={Unitary colligations, reproducing kernel Hilbert spaces, and
   Nevanlinna-Pick interpolation in several variables},
   journal={J. Funct. Anal.},
   volume={157},
   date={1998},
   number={1},
   pages={1--61},
   issn={0022-1236},
   review={\MR{1637941}},
   doi={10.1006/jfan.1998.3278},
}

\bib{Barik}{article}{
   author={Barik, Sibaprasad},
   author={Bhattacharjee, Monojit},
   author={Das, B. Krishna},
   title={Commutant lifting in the Schur-Agler class},
   journal={J. Operator Theory},
   volume={91},
   date={2024},
   number={2},
   pages={399--419},
   issn={0379-4024},
   review={\MR{4750925}},
}

\bib{bayart}{article}{
   author={Bayart, Fr\'ed\'eric},
   author={Pellegrino, Daniel},
   author={Seoane-Sep\'ulveda, Juan B.},
   title={The Bohr radius of the $n$-dimensional polydisk is equivalent to
   $\sqrt{(\log n)/n}$},
   journal={Adv. Math.},
   volume={264},
   date={2014},
   pages={726--746},
   issn={0001-8708},
   review={\MR{3250297}},
   doi={10.1016/j.aim.2014.07.029},
}

\bib{newapproach}{article}{
   author={Bernal-Gonz\'alez, Luis},
   author={Cabana, Hern\'an J.},
   author={Garc\'ia, Domingo},
   author={Maestre, Manuel},
   author={Mu\~noz-Fern\'andez, Gustavo A.},
   author={Seoane-Sep\'ulveda, Juan B.},
   title={A new approach towards estimating the $n$-dimensional Bohr radius},
   journal={Rev. R. Acad. Cienc. Exactas F\'is. Nat. Ser. A Mat. RACSAM},
   volume={115},
   date={2021},
   number={2},
   pages={Paper No. 44, 10},
   issn={1578-7303},
   review={\MR{4197435}},
   doi={10.1007/s13398-020-00986-1},
}

\bib{Bhowmik}{article}{
   author={Bhowmik, Mainak},
   author={Kumar, Poornendu},
   title={Factorization of functions in the Schur-Agler class related to
   test functions},
   journal={Proc. Amer. Math. Soc.},
   volume={152},
   date={2024},
   number={9},
   pages={3991--4001},
   issn={0002-9939},
   review={\MR{4781990}},
   doi={10.1090/proc/16900},
}

\bib{Bickel}{article}{
   author={Bickel, Kelly},
   author={Pascoe, J. E.},
   author={Tully-Doyle, Ryan},
   title={Analytic continuation of concrete realizations and the McCarthy
   champagne conjecture},
   journal={Int. Math. Res. Not. IMRN},
   date={2023},
   number={9},
   pages={7845--7882},
   issn={1073-7928},
   review={\MR{4584713}},
   doi={10.1093/imrn/rnac050},
}

\bib{BK}{article}{
   author={Boas, Harold P.},
   author={Khavinson, Dmitry},
   title={Bohr's power series theorem in several variables},
   journal={Proc. Amer. Math. Soc.},
   volume={125},
   date={1997},
   number={10},
   pages={2975--2979},
   issn={0002-9939},
   review={\MR{1443371}},
   doi={10.1090/S0002-9939-97-04270-6},
}

\bib{BH}{article}{
   author={Bohnenblust, H. F.},
   author={Hille, Einar},
   title={On the absolute convergence of Dirichlet series},
   journal={Ann. of Math. (2)},
   volume={32},
   date={1931},
   number={3},
   pages={600--622},
   issn={0003-486X},
   review={\MR{1503020}},
   doi={10.2307/1968255},
}

\bib{Bohr}{article}{
   author={Bohr, Harald},
   title={A Theorem Concerning Power Series},
   journal={Proc. London Math. Soc. (2)},
   volume={13},
   date={1914},
   pages={1--5},
   issn={0024-6115},
   review={\MR{1577494}},
   doi={10.1112/plms/s2-13.1.1},
}

\bib{Brehmer}{article}{
   author={Brehmer, S.},
   title={\"Uber vetauschbare Kontraktionen des Hilbertschen Raumes},
   language={German},
   journal={Acta Sci. Math. (Szeged)},
   volume={22},
   date={1961},
   pages={106--111},
   issn={0001-6969},
   review={\MR{0131169}},
}

\bib{davie}{article}{
   author={Davie, A. M.},
   title={Quotient algebras of uniform algebras},
   journal={J. London Math. Soc. (2)},
   volume={7},
   date={1973},
   pages={31--40},
   issn={0024-6107},
   review={\MR{0324427}},
   doi={10.1112/jlms/s2-7.1.31},
}

\bib{Debnath}{article}{
   author={Debnath, Ramlal},
   author={Sarkar, Jaydeb},
   title={Factorizations of Schur functions},
   journal={Complex Anal. Oper. Theory},
   volume={15},
   date={2021},
   number={3},
   pages={Paper No. 49, 31},
   issn={1661-8254},
   review={\MR{4239032}},
   doi={10.1007/s11785-021-01101-x},
}

\bib{defant}{article}{
   author={Defant, Andreas},
   author={Frerick, Leonhard},
   author={Ortega-Cerd\`a, Joaquim},
   author={Ouna\"ies, Myriam},
   author={Seip, Kristian},
   title={The Bohnenblust-Hille inequality for homogeneous polynomials is
   hypercontractive},
   journal={Ann. of Math. (2)},
   volume={174},
   date={2011},
   number={1},
   pages={485--497},
   issn={0003-486X},
   review={\MR{2811605}},
   doi={10.4007/annals.2011.174.1.13},
}

\bib{Defantbook}{book}{
   author={Defant, Andreas},
   author={Garc\'ia, Domingo},
   author={Maestre, Manuel},
   author={Sevilla-Peris, Pablo},
   title={Dirichlet series and holomorphic functions in high dimensions},
   series={New Mathematical Monographs},
   volume={37},
   publisher={Cambridge University Press, Cambridge},
   date={2019},
   pages={xxvii+680},
   isbn={978-1-108-47671-3},
   review={\MR{3967103}},
   doi={10.1017/9781108691611},
}

\bib{dineen}{book}{
   author={Dineen, Se\'an},
   title={Complex analysis on infinite-dimensional spaces},
   series={Springer Monographs in Mathematics},
   publisher={Springer-Verlag London, Ltd., London},
   date={1999},
   pages={xvi+543},
   isbn={1-85233-158-5},
   review={\MR{1705327}},
   doi={10.1007/978-1-4471-0869-6},
}

\bib{dixon}{article}{
   author={Dixon, P. G.},
   title={The von Neumann inequality for polynomials of degree greater than
   two},
   journal={J. London Math. Soc. (2)},
   volume={14},
   date={1976},
   number={2},
   pages={369--375},
   issn={0024-6107},
   review={\MR{0428075}},
   doi={10.1112/jlms/s2-14.2.369},
}

\iffalse
\bib{galicer}{article}{
   author={Galicer, Daniel},
   author={Muro, Santiago},
   author={Sevilla-Peris, Pablo},
   title={Asymptotic estimates on the von Neumann inequality for homogeneous
   polynomials},
   journal={J. Reine Angew. Math.},
   volume={743},
   date={2018},
   pages={213--227},
   issn={0075-4102},
   review={\MR{3859273}},
   doi={10.1515/crelle-2015-0097},
}
\fi

\bib{grinshpan}{article}{
   author={Grinshpan, Anatolii},
   author={Kaliuzhnyi-Verbovetskyi, Dmitry S.},
   author={Vinnikov, Victor},
   author={Woerdeman, Hugo J.},
   title={Classes of tuples of commuting contractions satisfying the
   multivariable von Neumann inequality},
   journal={J. Funct. Anal.},
   volume={256},
   date={2009},
   number={9},
   pages={3035--3054},
   issn={0022-1236},
   review={\MR{2502431}},
   doi={10.1016/j.jfa.2008.09.012},
}

\bib{drexelnorm}{article}{
   author={Grinshpan, Anatolii},
   author={Kaliuzhnyi-Verbovetskyi, Dmitry S.},
   author={Woerdeman, Hugo J.},
   title={Norm-constrained determinantal representations of multivariable
   polynomials},
   journal={Complex Anal. Oper. Theory},
   volume={7},
   date={2013},
   number={3},
   pages={635--654},
   issn={1661-8254},
   review={\MR{3057417}},
   doi={10.1007/s11785-012-0262-6},
}

\bib{grot}{article}{
   author={Friedland, Shmuel},
   author={Lim, Lek-Heng},
   author={Zhang, Jinjie},
   title={An elementary and unified proof of Grothendieck's inequality},
   journal={Enseign. Math.},
   volume={64},
   date={2018},
   number={3-4},
   pages={327--351},
   issn={0013-8584},
   review={\MR{3987148}},
   doi={10.4171/LEM/64-3/4-6},
}

\bib{harris}{article}{
   author={Harris, Lawrence A.},
   title={Bounds on the derivatives of holomorphic functions of vectors},
   conference={
      title={Analyse fonctionnelle et applications},
      address={Comptes Rendus Colloq. d'Analyse, Inst. Mat., Univ. Federal
      Rio de Janeiro, Rio de Janeiro},
      date={1972},
   },
   book={
      series={Actualit\'es Sci. Indust.},
      volume={No. 1367},
      publisher={Hermann, Paris},
   },
   date={1975},
   pages={145--163},
   review={\MR{0477773}},
}

\bib{hartz}{article}{
%@misc{hartz2023vonneumannsinequalitypolydisc,
      title={On von Neumann's inequality on the polydisc}, 
      author={Michael Hartz},
      year={2023},
      eprint={2311.14548},
     % archivePrefix={arXiv},
     % primaryClass={math.FA},
      %url={https://arxiv.org/abs/2311.14548}, 
}

\bib{Kahane}{book}{
   author={Kahane, Jean-Pierre},
   title={Some random series of functions},
   series={Cambridge Studies in Advanced Mathematics},
   volume={5},
   edition={2},
   publisher={Cambridge University Press, Cambridge},
   date={1985},
   pages={xiv+305},
   isbn={0-521-24966-X},
   isbn={0-521-45602-9},
   review={\MR{0833073}},
}

\bib{Kojin}{article}{
   author={Kojin, Kenta},
   title={Some relations between Schwarz-Pick inequality and von Neumann's
   inequality},
   journal={Complex Anal. Oper. Theory},
   volume={18},
   date={2024},
   number={4},
   pages={Paper No. 95, 16},
   issn={1661-8254},
   review={\MR{4744266}},
   doi={10.1007/s11785-024-01526-0},
}

\bib{lubin}{article}{
   author={Lubin, Arthur},
   title={Research notes on von Neumann's inequality},
   journal={Internat. J. Math. Math. Sci.},
   volume={1},
   date={1978},
   number={1},
   pages={133--135},
   issn={0161-1712},
   review={\MR{0473874}},
   doi={10.1155/S0161171278000162},
}

\bib{MQ}{article}{
   author={Maurizi, Brian},
   author={Queff\'elec, Herv\'e},
   title={Some remarks on the algebra of bounded Dirichlet series},
   journal={J. Fourier Anal. Appl.},
   volume={16},
   date={2010},
   number={5},
   pages={676--692},
   issn={1069-5869},
   review={\MR{2673704}},
   doi={10.1007/s00041-009-9112-y},
}

\bib{rodl}{article}{
   author={R\"odl, Vojt\v ech},
   title={On a packing and covering problem},
   journal={European J. Combin.},
   volume={6},
   date={1985},
   number={1},
   pages={69--78},
   issn={0195-6698},
   review={\MR{0793489}},
   doi={10.1016/S0195-6698(85)80023-8},
}

\bib{vN}{article}{
   author={von Neumann, Johann},
   title={Eine Spektraltheorie f\"ur allgemeine Operatoren eines unit\"aren
   Raumes},
   language={German},
   journal={Math. Nachr.},
   volume={4},
   date={1951},
   pages={258--281},
   issn={0025-584X},
   review={\MR{0043386}},
   doi={10.1002/mana.3210040124},
}

\bib{varo}{article}{
   author={Varopoulos, N. Th.},
   title={On an inequality of von Neumann and an application of the metric
   theory of tensor products to operators theory},
   journal={J. Functional Analysis},
   volume={16},
   date={1974},
   pages={83--100},
   issn={0022-1236},
   review={\MR{0355642}},
   doi={10.1016/0022-1236(74)90071-8},
}

\end{biblist}
\end{bibdiv}

\end{document}